\theoremstyle{plain} 
\newtheorem{thm}{Theorem}
\newtheorem{cor}[thm]{Corollary} 
\newtheorem{lem}[thm]{Lemma}
\theoremstyle{definition} 
\newtheorem{defn}{Definition}
\newtheorem{rem}[thm]{Remark} 
\newtheorem*{remk}{Remark}
\title {Signatures of topological branched covers}
\author{Christian Geske, Alexandra Kjuchukova, Julius L. Shaneson}
\thanks{This work was partially supported by NSF grant DMS 1821257 and NSF-RTG grant 1502553.}
\begin{document}
\maketitle

\begin{abstract}  

Let $X^4$ and $Y^4$ be smooth manifolds and $f: X\to Y$ a branched cover with branching set $B$. Classically, if $B$ is smoothly embedded in $Y$, the signature $\sigma(X)$ can be computed from data about $Y$, $B$ and the local degrees of $f$. When $f$ is an irregular dihedral cover and $B\subset Y$ smoothly embedded away from a cone singularity whose link is $K$, the second author gave a formula for the contribution $\Xi(K)$ to  $\sigma(X)$ resulting from the non-smooth point. 
We extend the above results to the case where $Y$ is a {\it topological} four-manifold and $B$ is locally flat, away from the possible singularity. Owing to the presence of non-locally-flat points on $B$, $X$ in this setting is a stratified pseudomanifold, and we use the Intersection Homology signature of $X$, $\sigma_{IH}(X)$.  For any knot $K$ whose determinant is not $\pm 1$, a homotopy ribbon obstruction is derived from $\Xi(K)$, providing a new technique to potentially detect slice knots that are not ribbon.  

\end{abstract}

\section{Introduction}

We give formulas for the signatures of branched covers in several contexts. First, suppose $B^{n-2} \subset Y^n$ is an inclusion of one topological manifold into another. 
Any connected unbranched cover of $Y-B$ can be uniquely extended to a branched cover $f: X \rightarrow Y$, a construction formalized in \cite{fox1957covering}.
If $B$ happens to be embedded locally flatly in $Y$, then $X$, too, is a topological manifold, and we can compute the signature of $X$.

\begin{thm}
\label{viro}
	Let $X$ and $Y$ be closed 4-dimensional topological manifolds, and let $f: X\to Y$ be an $n$-fold branched cover with branching set a closed, 
	locally flat surface $B$ embedded in $Y$. Assume that $X$ and $Y$ are compatibly oriented. Denote $A:=f^{-1}(B)$ and let $A_r\subset A$ be the union of components of branching index $r$; denote the normal Euler number of $A_r$ in $X$ by  $e(A_r)$. The following formula holds: 
	\begin{equation}
		 \sigma(X) = n \sigma(Y) - \sum_{r=2}^\infty \frac{r^2-1}{3} e(A_r).
	\end{equation}
\end{thm}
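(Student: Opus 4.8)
The plan is to show that the signature defect $\delta(f):=\sigma(X)-n\,\sigma(Y)$ is supported in an arbitrarily small neighborhood of the branching set, and then to evaluate it by reduction to the classical smooth computation. First I would apply the topological tubular neighborhood theorem to the locally flat surface $B\subset Y$ to fix a normal disk-bundle neighborhood $\nu=\nu(B)$, and set $W:=Y\setminus\operatorname{int}\nu$, the complement over which $f$ restricts to an honest $n$-fold covering. Writing $\tilde\nu:=f^{-1}(\nu)$ and $\tilde W:=f^{-1}(W)$, we have $X=\tilde\nu\cup_{\partial}\tilde W$ and $Y=\nu\cup_{\partial}W$, glued along their full boundaries. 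Novikov additivity of the signature, which holds for topological manifolds glued along entire boundary components, then gives $\delta(f)=\bigl[\sigma(\tilde\nu)-n\,\sigma(\nu)\bigr]+\bigl[\sigma(\tilde W)-n\,\sigma(W)\bigr]$.

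The key step is to show that the complement term $\sigma(\tilde W)-n\,\sigma(W)$ depends only on the boundary $\partial W$ together with its covering $\partial\tilde W\to\partial W$. Given two unbranched covers that agree over their boundaries, I would glue one to the orientation reverse of the other to form a closed cover $\tilde V\to V$; combining Novikov additivity with the multiplicativity $\sigma(\tilde V)=n\,\sigma(V)$ for closed finite covers shows that the two complement terms coincide. Multiplicativity for closed covers is available in the topological category because the rational Pontryagin classes, and hence the $L$-class and the Hirzebruch signature identity $\sigma=\langle L,[\,\cdot\,]\rangle$, are homeomorphism invariants by Novikov's theorem, while the $L$-class pulls back under the local homeomorphism $\tilde V\to V$. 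It follows that $\delta(f)$ is a local invariant of $(f,A)$: it depends only on the normal bundle of $B$ and the branching indices $r$, and is additive over the components of $A$.

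It then remains to evaluate $\delta(f)$ on the local model. Because topological $D^2$-bundles over surfaces are linear, the restriction of $f$ to a neighborhood of each component $A_r$ is homeomorphic to the smooth fiberwise $z\mapsto z^r$ branched cover of a disk bundle of Euler number $e(A_r)$. Realizing the same local data by a closed \emph{smooth} branched cover and invoking the locality just established, I would identify the full local contribution of each component with its value in the smooth setting, where the Hirzebruch--Viro cotangent-sum computation yields exactly $-\tfrac{r^2-1}{3}\,e(A_r)$. Summing over $r$ produces the stated formula, the index $r=1$ contributing nothing.

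The main obstacle is the reduction to a local invariant inside the purely topological category: establishing a genuine normal disk-bundle neighborhood for the locally flat surface $B$ in a topological four-manifold, and confirming that the complement defect collapses to a boundary invariant computable by smooth means (this is precisely what forces the use of Novikov's topological invariance of the rational Pontryagin classes, in place of the smooth $G$-signature or Atiyah--Patodi--Singer machinery). By contrast, once locality is in hand the coefficient $\tfrac{r^2-1}{3}$ is inherited verbatim from the classical smooth formula, so no new local analysis is required. A secondary point to verify is that every admissible tuple of branching indices and normal Euler numbers is realized by some closed smooth model, so that the comparison step is applicable.
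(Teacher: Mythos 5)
Your route is genuinely different from the paper's. The paper never localizes the signature defect: it first makes $Y$ smoothable by connected sum with a fixed $Z$ (killing the Kirby--Siebenmann obstruction and stabilizing by copies of $S^2\times S^2$), then compares the two smooth structures on the normal bundle $E$ of $B$ (the one restricted from $Y$ and the one making $B$ smooth), shows they are concordant because $H^3(E;\mathbb{Z}_2)=0$, crosses with $\mathbb{C}P^2$ so the Concordance Extension Theorem applies, and thereby smooths the entire eight-dimensional cover $X\times\mathbb{C}P^2\to Y\times\mathbb{C}P^2$; Viro's higher-dimensional formula is then applied globally, with the iterated self-intersection terms collapsing to $e(A_r)$. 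Your localization argument --- Novikov additivity over a normal disk-bundle decomposition, bordism invariance of the complement defect via multiplicativity of the signature for closed covers, and comparison with a smooth local model --- is a legitimate alternative, and both proofs ultimately inherit the coefficient $\tfrac{r^2-1}{3}$ from the smooth computation.

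However, the two steps you treat as routine are exactly where the topological category bites, and as written they are gaps. First, multiplicativity $\sigma(\tilde V)=n\,\sigma(V)$ for finite covers of closed oriented \emph{topological} $4$-manifolds: your appeal to Novikov invariance presupposes the Hirzebruch signature identity for closed topological $4$-manifolds, which itself needs an argument in dimension four (it is true, but the cheap proof is the paper's own Step 1 trick: choose $Z$ with $V\#Z$ smooth, note the induced cover is $\tilde V\#nZ$, and apply the smooth case plus Novikov additivity). Second, and more seriously, your ``secondary point'' is the crux of the comparison, and it is mis-stated: what you need is not that every tuple of indices and Euler numbers is realized by a closed smooth model, but that \emph{your} germ $\tilde\nu\to\nu$ embeds in one, i.e.\ that the covering of the $3$-manifold $\partial\nu$ extends over a compact \emph{smooth} filling. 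The obstruction is a class in $\Omega_3^{SO}(BS_n)$, a finite but generally nonzero group, so this is not automatic for arbitrary local data; it does hold here because the topological complement $W=Y\setminus\operatorname{int}\nu$ already provides a filling and compact topological $4$-manifolds with nonempty boundary are smoothable (Freedman--Quinn), equivalently because $\Omega_3^{SO}(BS_n)\to\Omega_3^{STOP}(BS_n)$ is an isomorphism. Once you supply that argument the proof closes up --- and in fact it would let you shortcut most of the localization by smoothing $\nu$ and $W$ separately and regluing along a diffeomorphism isotopic to the original boundary identification.
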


 This extends classical results on the signatures of smooth branched covers~\cite{{hirz1969signature},{viro1984signature}}. The conclusion of Theorem~\ref{viro} appears to have been taken for granted by many yet viewed with uncertainty by others. As far as we know, it has never appeared in print.

Our second signature formula applies to dihedral covers branched along surfaces embedded in the base with finitely many cone singularities. {For simplicity, we state the formula for the case of one singular point. The covering spaces we obtain are} 
  topological pseudomanifolds~\cite[Definition 4.1.1]{banagl2007topological}. Over a neighborhood of {each} cone singularity, the covering map is the cone on a branched cover in the usual sense. Here, we use $\sigma_{IH}(X)$, the intersection homology signature of $X$, which in the case of isolated singularities equals the Novikov signature of the manifold with boundary obtained by removing a small open neighborhood of the singular set. The definition and properties of irregular dihedral covers are recalled in Section~\ref{xi}. 

\begin{thm}\label{sashka}
Let $B \subset Y$ be the inclusion of a closed, surface with one cone singularity of type $K$ into a closed, oriented topological $4$-manifold.
Suppose also that $f: X \rightarrow Y$ is an irregular dihedral branched cover with branching set $B$, and which induces a $p$-coloring on the knot $K$.
Then:
\begin{equation}\label{eq-sigma}
	\sigma_{IH}(X) = p\sigma(Y)-\frac{p-1}{4}e(B)-\Xi_p(K).
\end{equation}
Here $e(B)$ denotes the self-intersection of $B$ in $Y$, and $\Xi_p(K)$ is an invariant of the knot $K$, together with its $p$-coloring, as introduced in~\cite{kjuchukova2018dihedral}.

\end{thm}

Here,  $\Xi_p(K)$ is an invariant of $K$ defined in~\cite{kjuchukova2018dihedral} for singularities that arise on dihedral covers between PL four-manifolds. By allowing the total space of the cover to be a topological pseudomanifold, we extend the definition of $\Xi_p$ to a much larger class of $p$-colorable knots $K$. The invariant $\Xi_p$ can be computed by a formula given in~\cite[Theorem 1.4 (1.3)]{kjuchukova2018dihedral}, which we now recall. Given a knot $K$ as above, let $V$ be a Seifert surface for $K$ and let $\kappa \subset V^\circ$ be a mod~$p$ characteristic knot for $K$ (as defined in~\cite{CS1984linking}), corresponding to the $p$-coloring of $K$ induced by the map $f$. Denote by $L_V$ the symmetrized linking form for $V$ and by $\sigma_{\zeta^i}$ the Tristram-Levine $\zeta^i$-signature, where $\zeta$ is a primitive $p^{\text{th}}$ root of unity.  Finally, let $W(K,\kappa)$ be the cobordism constructed in~\cite{CS1984linking} between the $p$-fold cyclic cover of $S^3$ branched along $\kappa$ and the $p$-fold irregular dihedral cover of $S^3$ branched along $K$ and determined by the $p$-coloring. We extend Theorem~1.4 of~\cite{kjuchukova2018dihedral} to show that, for all such $K$,

\begin{equation}
\label{xi}
	\Xi_p(K)=\dfrac{p^2-1}{6p}L_V(\kappa,\kappa)+\sigma(W(K,\kappa))+\sum_{i=1}^{p-1}\sigma_{\zeta^i}(\kappa).
\end{equation}

Using the above formula, $\Xi_p(K)$ can be evaluated directly from a $p$-colored diagram of $K$, without reference to the manifold $X$. An explicit algorithm for performing this computation is given in~\cite{cahnkjuchukova2018computing}. When $Y=S^4$, an alternative method for computing $\Xi_p(K)$ is to produce a trisection of $X$ from a colored diagram of $K$~\cite{cahnkjuchukova2017singbranchedcovers}.

{When the branching set has multiple singularities of types $K_1, \dots, K_n$, each of the knots $K_i$ contributes $-\Xi_p(K_i)$ to $\sigma(Y)$ in Equation~(\ref{eq-sigma}). This follows from the fact that Theorem~\ref{sashka} is proved by modifying the given branched cover in a neighborhood of the singular point. In the case of multiple singularities, this local procedure can be performed for all cone points simultaneously.}

Our third main result relates dihedral branched covers of $S^4$ to the Slice-Ribbon Conjecture of Fox in a new way. Specifically,  we use the invariant $\Xi_p(K)$ with $p$ square-free to obtain a homotopy ribbon obstruction for $K$. We show that for $K$ a $p$-colorable ribbon knot, the values of $\Xi_p(K)$ must fall within a bounded range. The analogous statement does not obviously hold for slice knots.

\begin{thm}
\label{inequality}
Assume $p > 1$ is odd and square-free.
Suppose $K \subset S^3$	 is a knot which admits $p$-colorings.
If $K$ is homotopy ribbon, then there is a $p$-coloring on $K$ for which:
\begin{align}
\label{ineq}
|\Xi_p(K)| \leq rk\, H_1(M)+\frac{p-1}{2}.  	
\end{align}
where $M \rightarrow S^3$ is the irregular dihedral cover branched along $K$ induced by the $p$-coloring.
\end{thm}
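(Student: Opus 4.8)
The plan is to manufacture, from a homotopy ribbon disk for $K$, a closed singular branched cover of $S^4$ to which Theorem~\ref{sashka} applies, thereby identifying $\Xi_p(K)$ with the ordinary signature of a compact $4$-manifold bounded by $M$; the inequality will then follow by bounding that signature by a second Betti number that the homotopy-ribbon hypothesis controls. Concretely, let $D\subset B^4$ be a homotopy ribbon disk for $K$ and set $Y=S^4=B^4\cup_{S^3}B^4$, with $D$ in the first ball and the cone $cK$ in the second. Then $B:=D\cup_K cK$ is a $2$-sphere in $S^4$, locally flat except for one cone singularity of type $K$ (Definition~\ref{conesing}); since $B^4-cK\simeq S^3-K$, van Kampen gives $\pi_1(S^4-B)\cong\pi_1(B^4-D)$. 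To obtain an irregular dihedral cover I must choose a $p$-coloring $\rho_0\colon\pi_1(S^3-K)\twoheadrightarrow D_p$ that factors through $\pi_1(B^4-D)$, and I would first argue that one exists: the double branched cover $\Sigma_2(K)$ bounds the rational homology ball given by the double cover of $B^4$ along $D$, so the linking form on $H_1(\Sigma_2(K))$ has a metabolizer, and for $p$ square-free dividing $\det(K)$ a nonzero map to $\mathbb{Z}_p$ killing this metabolizer produces a coloring extending over $B^4-D$. Fixing such $\rho_0$, the induced surjection $\rho\colon\pi_1(S^4-B)\twoheadrightarrow D_p$ defines $f\colon X\to S^4$, and by Lemma~\ref{pseudomanifold} $X$ is a pseudomanifold with a single singular point whose link is $M$.

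Next I would apply Theorem~\ref{sashka}. Since $\sigma(S^4)=0$ and $B$ is null-homologous in $S^4$, so that $e(B)=0$, the formula reduces to $\Xi_p(K)=\sigma_{IH}(X)=\sigma(\tilde X)$, where $\tilde X:=f^{-1}(B^4_{\mathrm{slice}})$ is the $p$-fold irregular dihedral cover of $B^4$ branched over $D$; this is a compact oriented $4$-manifold with $\partial\tilde X=M$, and the cone piece contributes $0$ to $\sigma_{IH}$ exactly as in the proof of Theorem~\ref{sashka}. It then suffices to bound $|\sigma(\tilde X)|\le b_2(\tilde X)$ and estimate $b_2(\tilde X)$ via the Euler characteristic. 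The unbranched part $f^{-1}(B^4-D)$ is a cover of a space with $\chi=0$ and so contributes nothing; over the contractible disk $D$ the monodromy is trivial, so by the local dihedral model of Remark~\ref{irregprop} (one index-$1$ and $\tfrac{p-1}{2}$ index-$2$ sheets) the preimage $f^{-1}(D)$ is a disjoint union of $\tfrac{p+1}{2}$ disks. Thus $\chi(\tilde X)=\tfrac{p+1}{2}$, and since $b_0(\tilde X)=1$ and $b_4(\tilde X)=0$ we obtain $b_2(\tilde X)=\tfrac{p-1}{2}+b_1(\tilde X)+b_3(\tilde X)$.

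Finally I would feed in the homotopy-ribbon hypothesis. The surjection $\pi_1(S^3-K)\twoheadrightarrow\pi_1(B^4-D)$ lifts to the connected covers determined by $\rho_0$ and $\rho$, and filling in the branch loci induces surjections on $\pi_1$; composing yields $\pi_1(M)\twoheadrightarrow\pi_1(\tilde X)$ and hence $H_1(M;\mathbb{Q})\twoheadrightarrow H_1(\tilde X;\mathbb{Q})$. In the long exact sequence of $(\tilde X,M)$, Lefschetz duality identifies $\dim H_1(\tilde X,M;\mathbb{Q})$ with $b_3(\tilde X)$ and realizes the image of $H_1(M)\to H_1(\tilde X)$ as the kernel of $H_1(\tilde X)\to H_1(\tilde X,M)$; surjectivity of $H_1(M)\to H_1(\tilde X)$ therefore forces $b_3(\tilde X)=0$ and $b_1(\tilde X)\le\operatorname{rk}H_1(M)$. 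Substituting gives $|\Xi_p(K)|=|\sigma(\tilde X)|\le b_2(\tilde X)=\tfrac{p-1}{2}+b_1(\tilde X)\le\operatorname{rk}H_1(M)+\tfrac{p-1}{2}$, as claimed.

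The main obstacle is the last step together with the coloring-existence claim: the genuine content is that homotopy ribbonness, which is a statement about $\pi_1$ downstairs, must be transported through the irregular dihedral cover and across the non-locally-flat branch locus in order to conclude $b_3(\tilde X)=0$, and that an extending $p$-coloring can be found at all. The Euler-characteristic and signature bookkeeping in the middle paragraph is routine once the branch-locus model of Remark~\ref{irregprop} is in hand; everything of substance is concentrated in correctly lifting surjectivity to the covers and in the metabolizer argument that selects the coloring.
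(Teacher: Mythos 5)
Your proposal is correct and follows essentially the same route as the paper: cone off a homotopy ribbon disk to build a singular dihedral cover $X\to S^4$, use the extension of the signature formula to identify $\Xi_p(K)$ with $\sigma_{IH}(X)=\sigma(W)$ for $W=f^{-1}(B^4)$, compute $\chi(W)=\tfrac{p+1}{2}$ from the local model of Remark~\ref{irregprop}, and convert the homotopy ribbon hypothesis into $\pi_1(M)\twoheadrightarrow\pi_1(W)$ to bound $b_1(W)$ by $rk\,H_1(M)$. The only (cosmetic) difference is that you run the duality and Euler-characteristic bookkeeping on the manifold-with-boundary $W$ via Lefschetz duality and the long exact sequence of $(W,M)$, whereas the paper packages the same computation as intersection homology Poincar\'e duality on the closed pseudomanifold $X$ (Lemmas~\ref{existence} and~\ref{calc}).
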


This result significantly expands the class of knots covered by the obstruction in~\cite{cahnkjuchukova2017singbranchedcovers}, which applies to slice knots with dihedral covers   
$S^3$. In contrast, Theorem~\ref{inequality} includes all slice knots that admit $p$-colorings for some square-free $p$. Any slice knot whose determinant is not $\pm 1$ is therefore covered by this theorem and could potentially be shown non-ribbon using the  $\Xi_p(K)$ invariant.   

\section{Signatures of branched covers between topological four-manifolds}	
\label{top}

This section is dedicated to the proof of Theorem~\ref{viro}. 
For the convenience of the reader, we recall that for $B^2\subset Y^4$ a locally flat submanifold as in the statement of the theorem, and the normal Euler number is defined as follows: If $B$ happens to be orientable, then a choice of fundamental class determines an element $[B] \in H_2(Y)$ as well as its Poincar\'{e} dual $\nu_B \in H^2(Y)$. Then $e(B)$ is the evaluation $\nu_B[B] \in \mathbb{Z}$.
If $B$ is not orientable, select a neighborhood $T$ of $B$ such that $B \hookrightarrow T$ induces an isomorphism of fundamental groups. 
	Then the oriented double cover $\widetilde{B} \rightarrow B$ induces a double cover $\widetilde{T} \rightarrow T$.
	A choice of fundamental class for $\widetilde{B}$ determines an element $[\widetilde{B}] \in H_2(\widetilde{T})$ and its Poincar\'{e} dual $\nu_{\widetilde{B}} \in H^2_c(\widetilde{T})$.
	The normal euler number is $\frac{1}{2}\nu_{\widetilde{B}}[\widetilde{B}] \in \mathbb{Z}$ (see, for example,~\cite[Section 3]{kamada1989nonorientable}).

We will need the following lemmas.  

\begin{lem}\label{connectsmooth}
Suppose $Y$ is an oriented topological $4$-manifold. There exists an oriented $4$-manifold $Z$ such that $Y \# Z$ admits a smooth structure and  $\sigma(Y\#Z) = \sigma(Y)$ or $2\sigma(Y)$.
\end{lem}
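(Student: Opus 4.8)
The plan is to reduce everything to the Kirby--Siebenmann smoothing obstruction and exploit its additivity under connected sum. Recall that to any closed oriented topological $4$-manifold $M$ one associates its Kirby--Siebenmann invariant $\mathrm{ks}(M)\in\mathbb{Z}/2$, that this invariant is additive, $\mathrm{ks}(M\#N)=\mathrm{ks}(M)+\mathrm{ks}(N)$, and that it vanishes on any smoothable manifold. The key external input I would invoke is the stable smoothing theorem in dimension four (Kirby--Siebenmann smoothing theory in the stable range, combined with the work of Freedman--Quinn): a closed oriented topological $4$-manifold $M$ becomes smoothable after connected sum with finitely many copies of $S^2\times S^2$ if and only if $\mathrm{ks}(M)=0$. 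Since $S^2\times S^2$ is smooth with $\sigma(S^2\times S^2)=0$, such stabilization changes neither $\mathrm{ks}$ nor the signature. One may assume $Y$ is connected at the outset, since both invariants in play are componentwise.

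With these tools the argument splits according to the value of $\mathrm{ks}(Y)$, and this dichotomy is exactly what produces the two allowed answers $\sigma(Y)$ and $2\sigma(Y)$. If $\mathrm{ks}(Y)=0$, I would simply set $Z=k(S^2\times S^2)$ for $k$ large enough that $Y\#Z$ is smooth; additivity of the signature then gives $\sigma(Y\#Z)=\sigma(Y)+k\,\sigma(S^2\times S^2)=\sigma(Y)$, the first alternative. Here it is important not to claim $Y$ itself is smoothable, which can fail in dimension four even when $\mathrm{ks}(Y)=0$; the $S^2\times S^2$ summands are genuinely needed.

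If instead $\mathrm{ks}(Y)=1$, the idea is to cancel the obstruction by doubling. Since $\mathrm{ks}(Y\#Y)=2\,\mathrm{ks}(Y)=0$ in $\mathbb{Z}/2$, the stable smoothing theorem makes $(Y\#Y)\#k(S^2\times S^2)$ smooth for some $k$. I would then take $Z=Y\#k(S^2\times S^2)$, an oriented topological $4$-manifold, so that $Y\#Z=Y\#Y\#k(S^2\times S^2)$ is smooth and $\sigma(Y\#Z)=2\sigma(Y)+k\,\sigma(S^2\times S^2)=2\sigma(Y)$, the second alternative. Note that $Z$ itself need not be smoothable (indeed $\mathrm{ks}(Z)=\mathrm{ks}(Y)=1$), which is permitted since the lemma only asks that $Y\#Z$ admit a smooth structure. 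The only genuine obstacle is invoking the correct stable smoothing statement: one must remember that the handle-by-handle smoothing of dimensions $\geq 5$ fails in dimension four and works only after stabilizing by $S^2\times S^2$ summands, with the sole stable obstruction being the Kirby--Siebenmann class. Granting that standard fact, everything else is bookkeeping with the additive invariants $\mathrm{ks}$ and $\sigma$.
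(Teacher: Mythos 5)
Your argument is correct and is essentially identical to the paper's proof: both use additivity of the Kirby--Siebenmann invariant to reduce to $Y$ or $Y\# Y$, stabilize by copies of $S^2\times S^2$ to achieve smoothability (the paper cites Lashof--Shaneson for this), and conclude via Novikov additivity and $\sigma(S^2\times S^2)=0$. The case split on $\mathrm{ks}(Y)$ and the resulting two values $\sigma(Y)$ or $2\sigma(Y)$ match the paper exactly.
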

\begin{proof}
The first obstruction to $Y$ admitting a smooth structure is the Kirby-Siebenmann invariant, $ks(Y)$. Since this is a $\mathbb{Z}/2\mathbb{Z}$-valued invariant which is additive with respect to the connected sum operation, if it does not vanish for $Y$, it does for $Y\#Y$. By further taking the connected sum with finitely many copies of $S^2 \times S^2$, we obtain a four-manifold which admits a smooth structure~\cite{lashof1971smoothing}.
Put together, we have found an oriented $4$-manifold $Z$ such that the connect sum $Y \# Z$ admits a smooth structure. Here, if $Z$ is non-empty, we have $Z\cong \#k(S^2 \times S^2)$ or $Z\cong Y\#k(S^2 \times S^2)$. Since $\sigma(S^2 \times S^2)=0$, by Novikov additivity we have that $\sigma(Y \# Z)=\sigma(Y)$ or $2\sigma(Y)$.
\end{proof}

Every locally flat embedding of a surface $B$ into a topological $4$-manifold $Y$ comes with a normal bundle, as follows from \cite[Section 9.3]{freedman1990topology}.
Here by a {\it normal bundle} of the embedding $B \hookrightarrow Y$, we mean a two-dimensional vector bundle $E \rightarrow B$, together with an embedding $E \hookrightarrow Y$, which when restricted to the zero-section of $E$ is the given embedding of $B$. 
Observe that the embedding of $E$ into $Y$ is open by invariance of domain.

We need to consider the possible smooth structures on a vector bundle $E$ over a surface $B$.
Observe that $E$ admits at least one smooth structure: select a smooth structure on $B$, then $E$ can be endowed with a smooth structure as a vector bundle over a smooth manifold. On the other hand, we will need to consider the smooth structure induced on $E$ when regarded as an open submanifold of the smooth manifold $Y\#Z$ of Lemma~\ref{connectsmooth}. 

\begin{defn}
	Two smooth structures $\zeta$ and $\zeta'$ on a manifold $W$ are {\it concordant} {if} there is a smooth structure on $W \times [0,1]$ which restricts to $\zeta$ on $W \times 0$ and to $\zeta'$ on $W \times 1$.  We call this smooth structure on $W \times [0,1]$ a \textit{concordance}. 	
\end{defn}

\begin{lem}\label{concordant}
Suppose $E \rightarrow B$ is a two-dimensional vector bundle over a surface $B$.
Any two smooth structures on $E$ are concordant.
\end{lem}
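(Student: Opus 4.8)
The plan is to invoke smoothing theory, which reduces the statement to a homotopy-theoretic computation that vanishes for dimension reasons. Recall that, as observed just before the lemma, $E$ admits at least one smooth structure, obtained from a smooth structure on the base surface $B$; I fix this structure to serve as a basepoint throughout.

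First I would appeal to the smoothing theory of topological $4$-manifolds developed by Freedman and Quinn: relative to a fixed smooth structure, the concordance classes of smooth structures on a topological $4$-manifold $M$ are classified by the homotopy set $[M, \mathrm{TOP}/\mathrm{O}]$, with the fixed structure corresponding to the constant map. In dimensions $\geq 5$ this is the classical theorem of Kirby--Siebenmann; the substantive point is that it persists in dimension $4$, where concordance is strictly weaker than isotopy (this is exactly the phenomenon that permits exotic smooth structures while still allowing a clean concordance classification). It therefore suffices to prove that $[E, \mathrm{TOP}/\mathrm{O}]$ is a single point.

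Next, since $E$ is the total space of a vector bundle, it deformation retracts onto its zero-section, a copy of $B$; as $[\,\cdot\,,\mathrm{TOP}/\mathrm{O}]$ is a homotopy functor, this gives $[E,\mathrm{TOP}/\mathrm{O}]\cong[B,\mathrm{TOP}/\mathrm{O}]$. Now $\mathrm{TOP}/\mathrm{O}$ is $2$-connected, its first nonvanishing homotopy group being $\pi_3(\mathrm{TOP}/\mathrm{O})\cong\mathbb{Z}/2$ (detecting the Kirby--Siebenmann invariant), while $B$ has the homotopy type of a CW complex of dimension $\leq 2$. Since any map from a $2$-dimensional complex into a $2$-connected space is null-homotopic, $[B,\mathrm{TOP}/\mathrm{O}]=*$. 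Equivalently, in the range that matters $\mathrm{TOP}/\mathrm{O}\simeq K(\mathbb{Z}/2,3)$, so $[E,\mathrm{TOP}/\mathrm{O}]\cong H^3(E;\mathbb{Z}/2)\cong H^3(B;\mathbb{Z}/2)=0$ because $\dim B=2$. Combined with the previous paragraph, this shows that $E$ carries a unique concordance class of smooth structures, which is the assertion.

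I expect the only real obstacle to be locating and citing the correct machinery: the classification of concordance classes of smoothings by $[M,\mathrm{TOP}/\mathrm{O}]$ is genuinely delicate in dimension $4$ and rests on the topological surgery and disk-embedding techniques of Freedman--Quinn rather than the handle-trading arguments available in higher dimensions. Once that classification is granted, the remaining homotopy-theoretic input is elementary. I would also take care to confirm that the notion of concordance appearing in that classification coincides with the one defined above, namely a smooth structure on $E\times[0,1]$ restricting to the two given structures on the ends; this is immediate from the definitions.
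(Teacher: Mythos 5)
Your proof is correct and takes essentially the same route as the paper: the paper directly cites Freedman--Quinn Theorem 8.7B, which classifies concordance classes of smoothings of a $4$-manifold by $H^3(\,\cdot\,;\mathbb{Z}/2)$, and concludes by noting that $E$ has the homotopy type of a surface so this group vanishes. Your detour through $[E,\mathrm{TOP}/\mathrm{O}]$ and the $2$-connectivity of $\mathrm{TOP}/\mathrm{O}$ is just an unpacking of the same theorem, so there is no substantive difference.
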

\begin{proof}
By \cite[Theorem 8.7B]{freedman1990topology}, concordance classes of smooth structures on $E$ are classified by elements $H^3(E;\mathbb{Z}_2)$. Since $E$ has the homotopy type of a surface, $H^3(E;\mathbb{Z}_2)$ is trivial. 
\end{proof}

\begin{lem}\label{smoothcover}
Let $f: X \rightarrow Y$ be a branched cover between topological manifolds such that:
\begin{enumerate}[(i)]
\item $Y$ is smooth.
\item the branching set $B \subset Y$ is a smooth codimension two submanifold of $Y$.
\end{enumerate}
Then $X$ admits a {unique} smooth structure for which $f$ is a smooth branched cover in the sense of \cite{viro1984signature}.
\end{lem}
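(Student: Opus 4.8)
The plan is to build the smooth structure on $X$ by hand, gluing a covering smooth structure on the complement of the branch locus to an explicit model near it, and then to verify uniqueness by observing that both pieces are forced. Set $A=f^{-1}(B)$ and $U=X\setminus A$. Since $f$ is a Fox branched cover, $f|_U\colon U\to Y\setminus B$ is an honest covering map onto the smooth open manifold $Y\setminus B$; I would equip $U$ with the unique smooth structure making $f|_U$ a local diffeomorphism. (Any two smooth structures making the same continuous map a local diffeomorphism onto a fixed smooth target coincide, so this structure is forced and is the only candidate on $U$.) It remains to smooth $X$ across $A$ and to check the two structures agree on the overlap.

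For the model near $A$, I would use that $B$ is a \emph{smooth} codimension-two submanifold of the smooth manifold $Y$, hence has a smooth tubular neighborhood: a smooth $D^2$-bundle $\pi\colon E\to B$ with zero section $B$, which (after choosing a fiberwise complex structure and Hermitian metric, a contractible choice available once the normal bundle is oriented; in general one passes to the orientation double cover as in the Euler-number discussion) becomes a Hermitian line bundle. Fix a component $A_0$ of $A$ of branching index $r$. The local structure of a branched cover over a smooth codimension-two submanifold shows that $f_0:=f|_{A_0}\colon A_0\to B$ is a finite covering; I would give $A_0$ the pulled-back smooth structure. Then $f_0^*E\to A_0$ is a smooth Hermitian line bundle, and the topological normal bundle of $A_0$ already realizes a topological $r$-th root of it. Since complex line bundles over $A_0$ are classified by $H^2(A_0;\mathbb{Z})$ independently of category, I would choose a smooth line bundle $L$ together with a smooth isomorphism $L^{\otimes r}\xrightarrow{\ \sim\ }f_0^*E$; the fiberwise $r$-th power $P\colon L\to f_0^*E$ is then smooth and is fiberwise $z\mapsto z^r$. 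Comparing monodromy around meridians (an $r$-cycle) and around loops in $B$ (governed by $f_0$), the connected cover $S(L)\to S(E)$ of unit circle bundles agrees with $f$ over $E\setminus B$ near $A_0$, and uniqueness of the Fox completion gives a homeomorphism from a neighborhood of $A_0$ in $X$ onto the disk bundle $D(L)$ intertwining $f$ with the smooth map $(\text{bundle map})\circ P$. I would then \emph{declare} this neighborhood to carry the smooth structure of $D(L)$. With respect to it $f$ is smooth and is exactly the local model $z\mapsto z^r$ of \cite{viro1984signature}; off $A_0$ the power map is a local diffeomorphism, so this structure and the covering structure on $U$ both make $f$ a local diffeomorphism there and hence agree on the overlap. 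Taking disjoint such neighborhoods over the (finitely many) components of $A$ glues everything into a global smooth structure for which $f$ is a smooth branched cover.

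For uniqueness, the structure on $U$ is forced as above, and near each $A_0$ any Viro-compatible structure must present the normal disk bundle of $A_0$ smoothly with $f$ the fiberwise power map; two such differ only by the contractible choice of complex structure on the normal bundle, so the identity map between them is smooth and the structures coincide (in particular they are concordant, as needed for Lemma~\ref{concordant}). I expect the main obstacle to be precisely the model step, and specifically the realization that one must smooth across $A$ using the single global normal bundle $E$ rather than arbitrary adapted charts of $Y$. If one instead lifted a general smooth transition $\psi$ preserving $B$ through $z\mapsto z^r$, the lifted transition would require a smooth $r$-th root of $w\mapsto \psi_2(x,w^r)$, and such a root can fail to be smooth at the branch locus when the normal part of $\psi$ is not complex-linear to leading order (for instance a term like $\bar z^2$ produces a non-$C^\infty$ angular factor after extracting the root). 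Fixing the complex structure on the normal bundle, equivalently restricting to unitary trivializations of $E$, is what makes the relevant roots smooth, and verifying this rigidity carefully is where the real work lies.
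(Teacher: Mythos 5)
The paper disposes of this lemma in one line, by citing \cite{durfee1975periodicity}, Proposition 1.1; your proposal is in effect a self-contained proof of that cited result, so it is a genuinely different (and more informative) route. The existence half of your construction is sound: pulling back the smooth structure on $Y\setminus B$ to $U=X\setminus f^{-1}(B)$, identifying a neighborhood of each component $A_0$ of the upstairs branch locus with the disk bundle of an $r$-th root of $f_0^*E$ via uniqueness of the Fox completion, and gluing along the overlap where both structures make $f$ a local diffeomorphism --- this is exactly the standard construction. Two small points to tighten: the normal bundle of $B$ need not admit a complex structure when it is non-orientable, so the ``$r$-th root'' should be taken in the category of $O(2)$-disk bundles (the model $z\mapsto z^r$ is equivariant for $\theta\mapsto r\theta$ on $SO(2)$ and for conjugation) rather than by passing vaguely to an orientation double cover; and $A$ need only be a locally finite union of components, not a finite one, since no compactness is assumed in the statement.

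The uniqueness half has a genuine gap, and you have in fact already described the mechanism that defeats your own argument. Two smooth structures on $X$ both making $f$ a Viro branched cover present a neighborhood of $A_0$ as a fiberwise power map of a disk bundle, but with respect to two possibly different tubular neighborhood parametrizations of $B$ in $Y$; these differ by a self-diffeomorphism $\psi$ of a neighborhood of $B$ fixing $B$ whose normal derivative along $B$ need not be complex-linear. Lifting $\psi$ through $z\mapsto z^r$ produces exactly the non-$C^1$ comparison map you exhibit at the end: $h(z)=z\left(1+\epsilon\,\bar z^{\,r}/z^r\right)^{1/r}$ is a homeomorphism, smooth away from the branch locus, but not differentiable at $z=0$. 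Concretely, for $f(z)=z^2$ on $\mathbb{C}$ the pullback of the standard structure under such an $h$ is a second smooth structure, distinct from the standard one, for which $f$ is still a smooth branched cover in the sense of \cite{viro1984signature}. So the identity map between two Viro-compatible structures need not be smooth, and your assertion that ``two such differ only by the contractible choice of complex structure on the normal bundle'' overlooks the choice of tubular neighborhood downstairs, which is where the ambiguity lives. What is true --- and what \cite{durfee1975periodicity} provides, and all that the proof of Theorem~\ref{viro} actually uses --- is that the smooth structure is unique up to a diffeomorphism commuting with $f$; if you want a literal uniqueness statement you must either build the tubular neighborhood data into the definition of ``smooth branched cover'' or settle for uniqueness up to diffeomorphism (or concordance, which is what Lemma~\ref{concordant} supplies elsewhere in the argument).
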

\begin{proof}
The statement follows from \cite{durfee1975periodicity} Proposition~1.1.
\end{proof}

Viro's original formula for the signatures of branched covers~ \cite{viro1984signature} applies in higher dimensions as well, and requires the following notion of self-intersections of manifolds.  
If $A$ is a closed smooth submanifold of an oriented smooth manifold $Y$, then we can slightly isotope the inclusion $A \hookrightarrow X$ to obtain a smooth embedding $f: A \rightarrow X$ whose image  $\widetilde{A}$ is transverse to $A$ in $X$.
Define the {\it self-intersection $A^2$ of $A$ in $X$} to be the intersection $\widetilde{A} \cap A$.
The manifold $A^2$ can be equipped with a canonical orientation which does not depend up to oriented cobordism on the choice of small isotopy (see \cite[Section 1.2]{viro1984signature}).
In particular, we may unambiguosly discuss the signature of $A^2$.

\begin{lem}\label{selfintersection}
Suppose $A$ is a closed smooth submanifold of an oriented smooth manifold $X$, and $Z$ is a closed smooth, oriented manifold.
Consider $A \subset X$ and $A \times Z \subset X \times Z$.
The self-intersections $A^2 \times Z$ and $(A \times Z)^2$ determine the same oriented cobordism class. 
In particular, we have the following relation among signatures of self-intersections:
\begin{align*}
    \sigma\left(\left(A \times Z\right)^2\right) = \sigma(A^2) \cdot \sigma(Z).
\end{align*}
\end{lem}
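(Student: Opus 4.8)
The plan is to produce one convenient small isotopy of $A\times Z$ in $X\times Z$, namely the product of a small isotopy realizing $A^2$ with the identity on $Z$, and then to verify that the resulting self-intersection coincides with $A^2\times Z$ as an oriented manifold. Since the oriented cobordism class of a self-intersection does not depend on the choice of small isotopy (\cite[Section 1.2]{viro1984signature}), computing $(A\times Z)^2$ for this single choice is enough to pin down its cobordism class.

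Concretely, I would fix a small isotopy of the inclusion $A\hookrightarrow X$ whose time-one image $\widetilde{A}$ is transverse to $A$, so that $A^2=\widetilde{A}\cap A$. The corresponding isotopy of $A\times Z\hookrightarrow X\times Z$ is this isotopy performed in the $X$-direction and the identity on $Z$; its image is $\widetilde{A}\times Z$, and because $Z$ is compact this remains a small isotopy. At a point $(x,z)\in(\widetilde{A}\times Z)\cap(A\times Z)$ the tangent spaces split as $T_x\widetilde{A}\oplus T_zZ$ and $T_xA\oplus T_zZ$ inside $T_xX\oplus T_zZ$, so transversality of $\widetilde{A}\times Z$ and $A\times Z$ in $X\times Z$ is equivalent to transversality of $\widetilde{A}$ and $A$ in $X$, which holds by construction. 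Set-theoretically this gives $(A\times Z)^2=(\widetilde{A}\cap A)\times Z=A^2\times Z$.

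The remaining, and only delicate, point is to match orientations. Recall that the canonical orientation on a self-intersection is obtained from the identification $\nu_{X}A^2\cong(\nu_XA)|_{A^2}\oplus(\nu_XA)|_{A^2}$ together with $TX|_{A^2}=\nu_XA^2\oplus TA^2$, using the orientations of $X$ and of the doubled normal bundle. For the product, the normal bundle $\nu_{X\times Z}(A\times Z)$ is the pullback of $\nu_XA$ and lives entirely in the $X$-factor, so over a point of $A^2\times Z$ the same recipe yields $\nu_{X\times Z}\bigl((A\times Z)^2\bigr)\cong(\nu_XA)\oplus(\nu_XA)$, identically to the computation for $A^2$, while the tangent splitting reads $T(X\times Z)=\nu_XA^2\oplus(TA^2\oplus TZ)$. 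Because the ordering $\nu_XA^2$, $TA^2$, $TZ$ is preserved throughout, no reordering signs intervene, and the induced orientation on $A^2\times Z$ is precisely the product of the canonical orientation of $A^2$ with the given orientation of $Z$.

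Combining the three steps, $(A\times Z)^2$ and $A^2\times Z$ are diffeomorphic as oriented manifolds, hence represent the same oriented cobordism class. The signature identity then follows from multiplicativity of the signature under products, $\sigma(M\times N)=\sigma(M)\,\sigma(N)$ (with the convention that the signature vanishes in dimensions not divisible by four). I expect the orientation bookkeeping of the previous paragraph to be the main obstacle; once the normal-bundle splittings are seen to respect the product decomposition without any reordering of factors, the remainder is formal.
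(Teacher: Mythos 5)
Your proposal is correct and follows the same route as the paper's (much terser) proof: take $\widetilde{A}\times Z$ as the transverse copy of $A\times Z$ and observe that the intersection is $A^2\times Z$. The extra care you take with the smallness of the isotopy and the orientation bookkeeping is exactly the detail the paper leaves implicit, and it checks out.
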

\begin{proof}
If $\widetilde{A}\subset X$ is a transverse copy of $A$, then $\widetilde{A} \times Z\subset X\times Z$ is a transverse copy of $A\times Z$. The intersection of $A \times Z$ and $\widetilde{A} \times Z$ is $A^2 \times Z$.
\end{proof}

\begin{proof}[Proof of Theorem~\ref{viro}]
Our strategy is to modify the manifolds and covering maps involved until we produce a smooth branched cover in the sense of~\cite{viro1984signature} whose signature we can {\it (i)} express in terms of the signature of $X$; {\it (ii)} compute from the original map $f$ and its branching set.

\textit{Step 1. Reduce to the case where the base $Y$ admits a smooth structure.} 

Fix a branched cover $f: X^4\to Y^4$ as in the statement of the theorem, with $Y$ potentially non-smooth. By Lemma~\ref{connectsmooth}, there exists an oriented $4$-manifold $Z$ such that $Y \# Z$ admits a smooth structure.
Perform the connected sum operation away from the branching set so that the $n$-sheeted branched cover $f: X \rightarrow Y$ induces an $n$-sheeted branched cover $f': X' \rightarrow Y'$ whose downstairs (resp.\ upstairs) branching set is also $B$ (resp.\ $A$) and such that $X' \cong X \# n Z$ is the connect sum of $X$ with $n$ copies of $Z$. 

By Novikov Additivity,
the signature of $Y'$ is $\sigma(Y)+\sigma(Z)$ and the signature of $X'$ is $\sigma(X)+n\sigma(Z)$.
The normal Euler number of $A_r$, being a local invariant around $A_r$, remains constant whether we consider $A_r$ as a subset of $X$ or as a subset of $X'$.

Let's assume Theorem~\ref{viro} has been proven for $f': X' \rightarrow Y'$.
Combining the signature formula with the observations of the previous paragraph we find:
\begin{align*}
    \sigma(X)+n\sigma(Z) = n \sigma(Y) + n \sigma(Z) - \sum_{r=2}^\infty \frac{r^2-1}{3} \cdot e(A_r)
\end{align*}
from which Theorem~\ref{viro} is shown to hold for $f: X \rightarrow Y$.

Thus, it suffices to prove the theorem for the case where the base manifold $Y$ admits a smooth structure.

\textit{Step 2. Provided that $Y$ admits a smooth structure, relate the given branched cover to a smooth branched cover.}

Fix a smooth structure $\zeta$ on $Y$ and let $E$ be the total space of the normal bundle of $B$ in $Y$.
Select also a smooth structure on the surface $B$. Being a vector bundle over the smooth manifold $B$, $E$ inherits a smooth structure $\zeta'$ .
This smooth structure on $E$ is good for our purposes, because it renders $B$  a {smoothly} embedded submanifold of $E$.
Unfortunately, this smooth structure is not necessarily extendable to a smooth structure on $Y$.

But $E$ admits another smooth structure: the restriction $\zeta|_E$ of the already existing smooth structure on $Y$.
Lemma~\ref{concordant} shows that $\zeta|_E$ and $\zeta'$ are concordant.
Let $\Gamma$ be such a concordance of smooth structures.
In other words, $\Gamma$ is a smooth structure on $E \times [0,1]$ which restricts to $\zeta|_E$ on $E \times \{0\}$ and to $\zeta'$ on $E \times \{1\}$. We would like to extend this concordance to one on $Y$.

To that end, we consider the product of all our spaces with $\mathbb{C} P^2$, which will allow us to apply smoothing theorems that work only in higher dimensions.
This gives the $n$-sheeted branched cover:
\begin{align*}
f \times id: X \times \mathbb{C} P^2 \rightarrow Y \times \mathbb{C} P^2 \end{align*}
with downstairs branching set $B \times \mathbb{C} P^2$ and upstairs branching set $A \times \mathbb{C} P^2$.
Let $\alpha$ be the standard smooth structure on $\mathbb{C} P^2$.
The base $Y \times \mathbb{C} P^2$ admits product smooth structure $\zeta \times \alpha$.
The restriction of this smooth structure to $E \times \mathbb{C} P^2$ is:
\begin{align*}
    (\zeta \times \alpha)|_{E \times \mathbb{C} P^2} = \zeta|_E \times \alpha.
\end{align*}
We have also the product smooth structure $\Gamma \times \alpha$ on $E \times [0,1] \times \mathbb{C}P^2$.
The restriction of this smooth structure to $E \times \{0\} \times \mathbb{C} P^2$ is $\zeta|_E \times \alpha$ and the restriction to $E \times \{1\} \times \mathbb{C} P^2$ is $\zeta' \times \alpha$. 
Therefore the smooth structures $(\zeta \times \alpha)|_{E \times \mathbb{C}P^2}$ and $\zeta' \times \alpha$ on $E \times \mathbb{C}P^2$ are concordant.

By \cite[Concordance Extension Theorem]{kirby1977foundational} this concordance of smooth structures on $E \times \mathbb{C} P^2$ {extends} to a concordance of smooth structures on the enlarged product $Y \times \mathbb{C} P^2$, i.e.\ a concordance which restricts to $\zeta \times \alpha$ on $Y \times  \{0\} \times \mathbb{C} P^2$ and restricts to $\zeta' \times \alpha$ on $E \times  \{1\} \times \mathbb{C} P^2$.
In particular, the restriction of the extended concordance to all of $Y \times  \{1\} \times \mathbb{C} P^2$ 
gives a smooth structure on $Y \times \mathbb{C} P^2$ that extends the smooth structure $\zeta' \times \alpha$ on $E \times \mathbb{C} P^2$.
Note that we have not changed the topological structure of $Y \times \mathbb{C} P^2$ in any way, but have merely found for it a particular smooth structure. 

Equip $Y \times \mathbb{C} P^2$ with the smooth structure extending $\zeta' \times \alpha$ whose existence we just established.
For this structure, the branching set $B \times \mathbb{C} P^2$ (which again has not changed topologically) becomes a smoothly embedded submanifold.
By Lemma~\ref{smoothcover}, the branched cover $X \times \mathbb{C} P^2 \rightarrow Y \times \mathbb{C} P^2$ is smooth.
Viro's original formula \cite{viro1984signature} now applies.

\textit{Step 3. Apply Viro's formula to the smooth branched cover $f\times id: X \times \mathbb{C} P^2 \rightarrow Y \times \mathbb{C} P^2$.}

Recall that the upstairs branching set of the cover $f\times id$ is  $A \times \mathbb{C} P^2$.

Using multiplicativity of the signature, and $\sigma(\mathbb{C} P^2) = 1$, the formula yields:
\begin{align*}
    \sigma(X) = n\sigma(Y) + \mbox{local contributions near $A \times \mathbb{C} P^2$.}
\end{align*}
These local contributions take into account iterated self-intersections of $A_r \times \mathbb{C} P^2$ in $X \times \mathbb{C} P^2$, where we recall that $A_r$ is the union of those components of $A$ with branching index $r \geq 2$.
By Lemma~\ref{selfintersection}, the self-intersection $(A_r \times \mathbb{C} P^2)^2$ is $A_r^2 \times \mathbb{C} P^2$.
Because $A_r$ is a closed two-dimensional submanifold of a four-manifold, $A_r^2$ is a finite set of points in $X$.
So the space $A_r^2 \times \mathbb{C} P^2$ has empty self-intersection, and all further iterated self-intersections will vanish.
Therefore, the formula gives:
\begin{align*}
    \sigma(X) = n\sigma(Y) - \sum_{r = 2}^\infty \frac{r^2-1}{3r}\cdot \sigma(A_r^2 \times \mathbb{C} P^2) = n\sigma(Y)-\sum_{r = 2}^\infty \frac{r^2-1}{3r}\cdot \sigma(A_r^2).
\end{align*}
But $\sigma(A_r^2)$ counts the number of points of self-intersection with sign, and is exactly the normal Euler number $e(A_r)$ of $A_r$ in $X$.
\end{proof}

\section{Signatures of dihedral branched covers}	
\label{xi}	

In this section, we prove Theorem~\ref{sashka}. For the rest of this paper we focus on {\it irregular dihedral} branched covers, whose definition is recalled below. We denote the dihedral group of order $2p$ by $D_p$ and, throughout this article, we assume that $p$ is odd.

Let $B \subset Y$ be a codimension-two inclusion of topological manifolds.
Let $X \rightarrow Y$ be a connected branched cover with branching set $B$.
Recall that such a branched cover is determined by a connected unbranched cover of $Y-B$, which is in turn determined by the conjugacy class of a subgroup of $\pi_1(Y-B)$.

\begin{defn}
\label{dihedral}
We say that the branched cover $X \rightarrow Y$ is {\it irregular dihedral} if it corresponds to a conjugacy class $\rho^{-1}(\mathbb{Z}_2)$ where $\rho: \pi_1(Y-B) \twoheadrightarrow D_p$ is some surjection and $\mathbb{Z}_2$ is the conjugacy class of elements of order $2$ in $D_p$.   
\end{defn}

Note that surjectivity of $\rho$ implies that an irregular dihedral cover is connected.

\begin{defn}\label{conesing}
We say that an inclusion $B \subset Y$ of a surface into a topological $4$-manifold has a {\it cone singularity of type the knot $K \subset S^3$} if there exists distinguished $b_0 \in B$ such that:
\begin{itemize}
\item The inclusion $B\backslash\{b_0\} \hookrightarrow Y$ is locally flat.
\item $b_0$ has a neighborhood $V$ in $Y$ for which there is a homeomorphism of triples:
$$(V, V \cap B, b_0) \cong (cS^3, cK, *)$$
where $c$ denotes the cone operation and $*$ the cone point.
\end{itemize}
\end{defn}

We are interested in irregular dihedral covers $f: X \rightarrow Y$ over $4$-manifolds. We will assume that the branching set $B$ has a cone singularity whose type is a knot $K$.
Let $\rho: \pi_1(Y-B) \twoheadrightarrow D_p$ be the surjective homomorphism associated to the branched cover.
By identifying $(V, V \cap B, b_0)$ with $(cS^3, cK, *)$, we can consider the composition:
$$\pi_1(S^3\backslash K) \xrightarrow{i_*} \pi_1(Y-B) \xrightarrow{\rho} D_p.$$
When this composition is a surjection, we say that the cover $f$ \textit{induces a $p$-coloring on the knot $K$} or, equivalently, that the singularity at $b_0$ is \textit{normal}.
Again, this is equivalent to the associated irregular dihedral cover $M \rightarrow S^3$ branched along $K$ being connected. 

\begin{remk}\label{irregprop} We state a few basic properties of irregular dihedral branched covers.
Suppose $B \subset Y$ is the inclusion of a connected surface with a cone singularity of type $K$ into a 4-manifold $Y$.
Suppose also that $f: X \rightarrow Y$ is an irregular dihedral branched cover with branching set $B$, such that $f$ induces a $p$-coloring on the knot $K$ in the above sense.
Let $M \rightarrow S^3$ denote the induced irregular dihedral cover branched along $K$.	
It follows that:
\begin{itemize}
\item $f^{-1}(S^3 - K)$ is connected and $f^{-1}(S^3) = M$.
\item The restriction $f|: f^{-1}(B\backslash\{b_0\}) \rightarrow B\backslash\{b_0\}$ is an unbranched covering map with $\frac{1}{2}(p+1)$ sheets.
\item For any $b \in B\backslash\{b_0\}$, $f^{-1}(b)$ contains $\frac{1}{2}(p-1)$ points of branching index $2$ and one point of branching index $1$.
\end{itemize}
These properties can be verified via the local parametrization description for branched covers and with the aid of \cite[Proposition 11.2]{massey1967algebraic}.
\end{remk}

In our generalization of Kjuchukova's signature formula, we allow the branched covering space to be a topological pseudomanifold, as defined for example in \cite[Definition 4.1.1]{banagl2007topological}.
We begin by proving:

\begin{lem}\label{pseudomanifold}
	Let $B \subset Y$ be the inclusion of a surface with a cone singularity of type $K$ into a topological $4$-manifold.
	Assume that $f: X \rightarrow Y$ is an irregular dihedral branched cover with branching set $B$ and that $f$ induces a $p$-coloring on the knot $K$.
	Then $X$ has the structure of a pseudomanifold.
\end{lem}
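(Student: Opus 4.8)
The plan is to verify the pseudomanifold axioms directly by exploiting the two-tier structure of $X$: away from the cone point $b_0$ the branching set is locally flat, so Theorem~\ref{viro}'s setup already tells us $X$ is a genuine topological $4$-manifold there, while over $b_0$ the covering map is explicitly the cone on a branched cover $M \to S^3$. Recall from \cite[Definition 4.1.1]{banagl2007topological} that an $n$-dimensional topological pseudomanifold requires a filtration by closed subsets $X = X_n \supseteq X_{n-1} = X_{n-2} \supseteq \cdots \supseteq X_0$ such that $X - X_{n-2}$ is a dense $n$-manifold and each point of a stratum $X_k - X_{k-1}$ has a distinguished neighborhood homeomorphic to $\mathbb{R}^k \times cL$ for a compact pseudomanifold link $L$. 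So the real content is to produce such a filtration and check the cone-bundle condition at the one potentially-bad point.

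The first step is to set up the filtration. I would take $X_4 = X$, let $\Sigma = f^{-1}(b_0)$ be the preimage of the cone point, and set $X_0 = \Sigma$ with $X_1 = X_2 = X_3 = \Sigma$ as well (equivalently, $X_{n-2} = X_2 = \Sigma$, a finite set by Remark~\ref{irregprop} and the local degree count). Then $X - \Sigma = f^{-1}(Y - \{b_0\})$, and since $B \backslash \{b_0\} \hookrightarrow Y$ is locally flat, the branched cover restricted here is a branched cover along a locally flat surface; by the discussion preceding Lemma~\ref{smoothcover} (and the standard Fox-completion description) $X - \Sigma$ is a topological $4$-manifold, and it is dense in $X$ since $\Sigma$ is finite. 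This establishes the top-stratum condition and shows $\Sigma$ is nowhere dense.

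The second, and genuinely substantive, step is the local cone structure at each point of $\Sigma$. Here I use the distinguished neighborhood $(V, V \cap B, b_0) \cong (cS^3, cK, *)$ from Definition~\ref{conesing}. Because the covering map is determined by $\rho \circ i_* : \pi_1(S^3 \backslash K) \to D_p$ and this is compatible with the cone structure, $f^{-1}(V) \to V = cS^3$ is the cone on the induced cover $M \to S^3$ branched along $K$, where $f^{-1}(S^3) = M$ by Remark~\ref{irregprop}. Since $\Sigma$ is the cone point set upstairs and $M$ is connected (the $p$-coloring hypothesis), each point of $\Sigma$ has a neighborhood homeomorphic to the open cone $cM \cong \mathbb{R}^0 \times cM$, exhibiting exactly the required distinguished-neighborhood form with link $L = M$. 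One must note that $M$ is itself a closed $3$-manifold, hence trivially a compact $3$-dimensional pseudomanifold, so the link condition is met.

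\textbf{The main obstacle} I anticipate is not any single step but the bookkeeping around \emph{which} subset to declare singular and verifying genuine local flatness (hence manifold-ness) of $X$ over all of $B \backslash \{b_0\}$. A priori one might worry that the non-locally-flat point $b_0$ forces its entire preimage to be singular, but the point is that the \emph{only} singularity of $B$ is at $b_0$, so away from $b_0$ the upstairs space is a manifold and no stratification below dimension $4$ is needed there; the cone points upstairs sit precisely over $b_0$. I would also remark that the index-$1$ sheet over $b_0$ (the unbranched point) may give a manifold point upstairs whose link is a sphere, so the filtration can be taken with $\Sigma$ equal only to the genuinely singular cone points — but including the whole of $f^{-1}(b_0)$ in $X_0$ is harmless, since the pseudomanifold definition permits the manifold points to lie in a lower stratum as long as the link is a pseudomanifold and density holds. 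Finally, I would confirm that $X$ admits the required CW or locally-cone structure globally, which follows from patching the cone neighborhood at $\Sigma$ with the manifold structure on $X - \Sigma$.
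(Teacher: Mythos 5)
Your argument is correct and follows essentially the same route as the paper: identify $f^{-1}(V)$ with the cone $cM$ on the connected induced dihedral cover $M \rightarrow S^3$, and take $X \supset f^{-1}(b_0)$ as the pseudomanifold stratification, with the manifold structure away from $f^{-1}(b_0)$ coming from local flatness of $B\backslash\{b_0\}$. One small correction: because $M$ is connected, $f^{-1}(V)$ is a single cone and $f^{-1}(b_0)$ is exactly one point whose link is $M$ --- there is no separate index-one preimage of $b_0$ with spherical link --- but, as you note, this does not affect the verification.
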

\begin{proof}

Denote by $b_0$  the singularity of the embedding of $B$ in $Y$, and let $f|: M \rightarrow S^3$ be the induced (connected,  by Remark~\ref{irregprop}) irregular dihedral cover with branching set $K$. 
Then $f^{-1}(b_0)$ consists of a single point $b'_0$  that has conic neighborhood $cM$ in $X$.
In other words:
$$X \supset \{b_0'\}$$
is a pseudomanifold stratification, where the link of $b'_0$ in $X$ is $M$.
\end{proof}

If the manifold $Y$ in Lemma~\ref{pseudomanifold} is oriented, then $X$ inherits a pseudomanifold orientation.
In our case this is merely a manifold orientation on the complement of the singular set $X-\{b'_0\}$ as for example defined in \cite[Definition 8.1.5]{friedman2014singular}\footnote{{Definition and theorem numbering from reference \cite{friedman2014singular} is based on the July 24, 2019 preprint.}} in the topological setting.
If $Y$ happens to be PL, an orientation can also be interpreted as a coherent orientation of all $4$-simplices in a triangulation of $Y$ (see \cite[I.2.3]{borel2009intersection}).
If $Y$ is in addition closed, then so is $X$ and we can discuss the intersection homology signature $\sigma_{IH}(X)$ of $X$ (again see \cite[Definition 9.3.9]{friedman2014singular}, which applies in the topological setting).
We are now in a position to prove Theorem~\ref{sashka}, our generalization of Kjuchukova's signature formula in this context.

\textit{Remark.} As noted earlier, since $X$ has an isolated singularity, the intersection homology signature $\sigma_{IH}(X)$ is equal to the Novikov signature of the manifold with boundary obtained by removing from $X$ a small open neighborhood of the singularity.
Accordingly, our arguments can be recast in this language.

\begin{proof}[Proof of Theorem~\ref{sashka}]
By Lemma~\ref{pseudomanifold} and the comments following, $X$ is a closed topological pseudomanifold, and inherits an orientation from the orientation on $Y$.
Therefore, we can discuss the intersection homology signature of $X$.

Let $b_0 \in B$ be the singular point on $B$ and let $V$ be a neighborhood of $b_0$ in $Y$ as in Definition~\ref{conesing}.
Denote by $V^\circ$ the interior of $V$.	
By the proof of Lemma~\ref{pseudomanifold}, we can shrink $V$ if necessary to ensure that $M = f^{-1}(S^3)$ has a collar neighborhood in $X$, in which case $X$ can be decomposed into the union of oriented pseudomanifolds with boundary:
\begin{align*}
X = f^{-1}(V) \cup f^{-1}(Y\backslash V^\circ),	
\end{align*}
where $f^{-1}(V)$ is the cone on the topological manifold $M$.
The Novikov intersection homology signature \cite[Definition 9.3.11]{friedman2014singular} of the cone on a three-dimensional topological manifold is always zero. Indeed, it is 
the signature of
 a pairing on a subspace of $IH^{\bar{n}}_2(cM; \mathbb{Q})$, where $\bar{n}$ denotes the upper-middle perversity. But $IH^{\bar{n}}_2(cM; \mathbb{Q})$ vanishes by the cone formula \cite[Theorem 4.2.1]{friedman2014singular}.   

Combining the above with Novikov additivity~\cite[Theorem 9.3.22]{friedman2014singular}, we obtain the equalities:
\begin{align*}
\sigma_{IH}(X) = \sigma_{IH} \left(f^{-1}(Y\backslash V^\circ)\right) = \sigma \left(f^{-1}(Y\backslash V^\circ)\right),
\end{align*}
where the $IH$ subscript at the end is dropped because $f^{-1}(Y\backslash V^\circ)$ is a manifold with boundary.

The signature of $f^{-1}(Y\backslash V^\circ)$ was computed in~\cite[Proof of Theorem 1.4]{kjuchukova2018dihedral}. That is,
\begin{align}
\label{signatureformula}
\sigma\left(f^{-1}(Y\backslash V^\circ)\right) = p\sigma(Y) - \frac{p-1}{4} e(B) - \Xi_p(K),	
\end{align}
under the additional assumptions that $f^{-1}(S^3) \cong S^3$ and that all spaces and maps be PL. We will outline Kjuchukova's proof of the above in order to highlight the points where these assumptions play a role and to explain that they can be dropped using our Theorem~\ref{viro}.

The only role of the assumption that $f^{-1}(S^3) \cong S^3$ is to obtain a total space $X$ that is a PL manifold. Since we allow the total space $X$ to be a pseudomanifold by working with $\sigma_{IH}(X)$, there is no reason for us to retain this hypothesis.

The idea of the proof of~\cite[Theorem 1.4]{kjuchukova2018dihedral} is to geometrically resolve the singularity on the branching set of the given branched cover $f: X\to Y$, so that Viro's signature formula for smooth covers applies. In the process, one also keeps track of the effect that the resolution of singularity has on the signatures involved. 

To this end, the first step is to construct a new manifold $Z$ (in the notation of~\cite{kjuchukova2018dihedral}, $Z$ is $W(\alpha, \beta)\cup Q$) that also has $f^{-1}(S^3)$ as its boundary, with the property that the boundary union $T:=f^{-1}(Y\backslash V^\circ)\cup Z$ is also a $p$-fold branched cover of $Y$. 

The branching set $B'$ of the new cover $T\to Y$ differs from $B$ in a neighborhood of the singular point and is no longer an embedded surface. Instead, $B'$ is a two-dimensional PL subcomplex which contains a curve of non-manifold points.  
The next step is to simultaneously modify $Y$ and $T$ via a doubling construction that removes the non-manifold points on $B'$.  The result is yet another branched cover between PL manifolds, this time with a locally flat embedded surface for its branching set.
 
It is then possible to relate in a straightforward manner the signature of this final branched cover to the signature of $f^{-1}(Y\backslash V^\circ)$.
Viro's formula is used to find the signature of the branched cover thus constructed. Furthermore, the effect which the singularity resolution procedure has on the signature of the covering manifold is expressed in terms of the knot $K$ in Equation~(1.3) of~\cite[Theorem 1.4]{kjuchukova2018dihedral}. This accounts for the term $-\Xi_p(K)$ in Equation~\ref{signatureformula}. Note that applying Viro's formula is the step in which the PL assumption is necessary (using the fact that PL implies smooth in four dimensions). We can now remove the PL assumption thanks to Theorem~\ref{viro}. This leads us to:
\begin{align*}
\sigma\left(f^{-1}(Y\backslash V^\circ)\right) = p\sigma(Y) - \frac{p-1}{4} e(B) - \Xi_p(K).	
\end{align*}

Because $\sigma_{IH}(X) = \sigma\left(f^{-1}(Y\backslash V^\circ)\right)$, the theorem follows.
\end{proof}

\section{A homotopy ribbon obstruction}	
\label{ribbon}	

This section is dedicated to the proof of Theorem~\ref{inequality}. Recall that a slice knot $K \subset S^3$ is said to be \textit{topologically (resp. smoothly) homotopy ribbon} if there exists a locally flat (resp. smooth) slice disk $D \subset B^4$ for $K$ such that the homomorphism induced by inclusion,
$$i_*: \pi_1(S^3\backslash K) \rightarrow \pi_1(B^4-D),$$
is surjective.
We call $D$ a \textit{homotopy ribbon disk} for $K$. 
Theorem~\ref{inequality} shows that the knot invariant $\Xi_p(K)$ can be used to obtain an obstruction to the existence of such a disk for a $p$-colorable slice knot~$K$.
Our proof makes use of the following preliminary results.

\begin{lem}\label{diskext}
Let $K \subset S^3$ be a $p$-colorable slice knot, where $p>1$ is an odd square-free integer. Given any topologically slice disk $D$ for $K$, some $p$-coloring of $K$ extends over $D$. Equivalently, there is a $p$-fold irregular dihedral cover of $B^4$ with branching set $D$ that induces a $p$-coloring of $K$.

Suppose $W \rightarrow B^4$ is one such irregular dihedral cover of $B^4$ branched along $D$. Consider the induced $p$-coloring on $K$ and let $M \rightarrow S^3$ denote the associated irregular dihedral cover branched along $K$.
If $D$ is homotopy ribbon, then $rk\, H_1(W) \leq rk\, H_1(M)$.  
\end{lem}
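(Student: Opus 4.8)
The statement contains two assertions, and I would prove them separately: the existence of a coloring extending over $D$, and the rank inequality under the homotopy ribbon hypothesis. The inequality is essentially formal once the existence is in hand, so the bulk of the work is the existence claim, which I would approach through the double branched cover.

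For the existence of an extending coloring, my plan is to translate everything into the double branched cover $\Sigma_2(K)$. A $p$-coloring of $K$ is equivalent to a surjection $\phi\colon H_1(\Sigma_2(K);\mathbb{Z})\to\mathbb{Z}_p$: under the dictionary sending meridians to reflections, the $\mathbb{Z}_p$-part of a $D_p$-representation is recorded by such a $\phi$, and since the covering involution acts as $-1$ on $H_1(\Sigma_2(K))$, equivariance is automatic. The topologically slice disk $D$ gives its double branched cover $\Sigma_2(B^4,D)$, a topological $4$-manifold (the normal bundle of the locally flat $D$ exists, as recalled above) which is a rational homology ball with boundary $\Sigma_2(K)$. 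I would then invoke the standard fact that the torsion linking form of $\Sigma_2(K)$ is metabolic, with metabolizer $G=\ker\!\big(i_\ast\colon H_1(\Sigma_2(K))\to H_1(\Sigma_2(B^4,D))\big)$ satisfying $|G|^2=|H_1(\Sigma_2(K))|=\det K$. A coloring extends over $D$ exactly when the corresponding $\phi$ lies in the image of the restriction $H^1(\Sigma_2(B^4,D);\mathbb{Z}_p)\to H^1(\Sigma_2(K);\mathbb{Z}_p)$, and the goal is to produce such a $\phi$ that is still onto $\mathbb{Z}_p$.

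This is where square-freeness does the work. Writing $p=p_1\cdots p_k$ with the $p_j$ distinct primes, $p$-colorability forces $p_j\mid\det K$ for each $j$, hence $p_j\mid |G|^2$ and so $p_j\mid |G|=|H_1(\Sigma_2(K))/G|$. Thus $H_1(\Sigma_2(K))/G$ surjects onto each $\mathbb{Z}_{p_j}$, and since the $p_j$ are coprime these assemble, via the Chinese Remainder Theorem, into one surjection $H_1(\Sigma_2(K))/G\twoheadrightarrow\mathbb{Z}_p$. Precomposing with the quotient map gives a coloring that annihilates $G$, and I would then reduce modulo each $p_j$ — where $H^1(-;\mathbb{F}_{p_j})$ is dual to $H_1(-;\mathbb{F}_{p_j})$ and $\mathrm{im}(i^\ast)$ is precisely the annihilator of $\ker(i_\ast)$ — to conclude that this coloring lies in the image of restriction, i.e.\ extends over $D$. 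I expect the identification of ``extends over $D$'' with ``annihilates the metabolizer'' to be the main technical obstacle: it needs half-lives-half-dies for the rational homology ball $\Sigma_2(B^4,D)$ with field coefficients, together with the verification that the mod-$p_j$ reductions of the integral metabolizer remain Lagrangian. Square-freeness, which makes each coefficient ring a field, is exactly what keeps this step clean.

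For the inequality I would argue entirely with fundamental groups. Fix the reflection subgroup $\langle t\rangle\le D_p$ defining the covers, so that $M\to S^3$ and $W\to B^4$ correspond to $H_K=\rho^{-1}(\langle t\rangle)\le\pi_1(S^3\setminus K)$ and $H_D=(\rho')^{-1}(\langle t\rangle)\le\pi_1(B^4-D)$, where $\rho'$ extends $\rho$ and $\rho=\rho'\circ i_\ast$. The homotopy ribbon hypothesis says $i_\ast\colon\pi_1(S^3\setminus K)\to\pi_1(B^4-D)$ is surjective; since $H_K=i_\ast^{-1}(H_D)$, a short diagram chase shows that the restriction $i_\ast|\colon H_K\to H_D$ is again surjective, so $\pi_1$ of the unbranched cover upstairs surjects onto $\pi_1$ of the unbranched cover of $B^4-D$. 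Passing to the branched covers only quotients each side by the normal closure of the corresponding meridians, compatibly, so $\pi_1(M)\to\pi_1(W)$ is surjective as well. Abelianizing yields a surjection $H_1(M)\twoheadrightarrow H_1(W)$, and a surjection of finitely generated abelian groups can only decrease rank, giving $rk\,H_1(W)\le rk\,H_1(M)$. This part is formal; the real content of the lemma, and the step I would spend the most care on, is the metabolizer argument establishing existence.
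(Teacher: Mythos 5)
Your proposal is correct and follows essentially the same route as the paper: the existence claim is reduced to the double branched cover $\Sigma_2$ and the Casson--Gordon metabolizer/order relation, with square-freeness entering exactly as you use it (each prime $p_j$ divides $\det K$, hence a surjection onto $\mathbb{Z}_p$ exists), and the rank inequality comes from the homotopy ribbon hypothesis forcing $\pi_1(M)\twoheadrightarrow\pi_1(W)$, which is precisely the content of the cited Lemma~3.3 of Kjuchukova's paper that you rewrite explicitly. The only real difference is directional: the paper produces the $\mathbb{Z}_p$-surjection on $H_1$ of the branched cover of $(B^4,D)$ and restricts to the boundary, which sidesteps the extension problem you correctly flag as your main technical obligation (and which dissolves once one knows $i_\ast\colon H_1(\Sigma_2(K))\to H_1(\Sigma_2(B^4,D))$ is onto).
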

\begin{proof}
The first paragraph is a classical result, essentially a consequence of the Fox-Milnor condition on the Alexander polynomials of slice knots~\cite{fox1966singularities}. We briefly sketch the argument. Recall that a $p$-fold irregular dihedral cover of $S^3$ branched along $K$ exists if and only if the two-fold branched cover $\Sigma_2$ of $K$ admits a $p$-fold cyclic unbranched cover, as explained in~\cite{CS1984linking}. Additionally, a $p$-fold unbranched cover of $\Sigma_2$ exists if and only if $H_1(\Sigma_2; \mathbb{Z})$ surjects onto  $\mathbb{Z}/p\mathbb{Z}$. Indeed, every surjection $\pi_1(S^3-K) \twoheadrightarrow D_p = \mathbb{Z}_2 \ltimes \mathbb{Z}_p$ factors through $\mathbb{Z}_2 \ltimes H_1(\Sigma_2;\mathbb{Z})$ where $\mathbb{Z}_2$ acts on $H_1(\Sigma_2; \mathbb{Z})$ by translation (which happens also to be inversion), so that a surjection $H_1(\Sigma_2;\mathbb{Z}) \twoheadrightarrow \mathbb{Z}_p$ induces a surjection $\pi_1(S^3-K) \twoheadrightarrow D_p$ (see \cite[Proposition 14.3 and Remark before Theorem 8.21]{burde2013knots}). For $p$ square-free, this is equivalent to saying that $p$ divides $|H_1(\Sigma_2; \mathbb{Z})|$. The analogous statements hold regarding the existence of a dihedral cover of $B^4$ branched along $D$. We now use~\cite[Lemma 3]{cass-gor1986cobordism} to conclude that, for $p$ square-free, $\Sigma_2$ admits a $p$-fold cyclic unbranched cover if and only if the double cover of $B^4$ branched along $D$ does. Put differently, some $p$-coloring of $K$ extends over $D$, and the induced dihedral branched cover of $D$ satisfies the conclusion of the lemma.

The proof of~\cite[Lemma 3.3]{kjuchukova2018dihedral} can be applied to conclude the second paragraph, since, if $D$ is homotopy ribbon, it follows that $i_\ast: \pi_1(M) \twoheadrightarrow \pi_1(W)$. 
\end{proof}

\begin{defn}
We say that a $p$-coloring of a knot $K$ is \textit{s-extendible} if it extends over some slice disk $D\subset B^4$ for $K$.
We say the coloring is \textit{hr-extendible} if it extends over some homotopy ribbon disk $D\subset B^4$ for $K$. 	
\end{defn}

\begin{rem}
\label{metab}
Let $p > 1$ be odd and square-free.
To determine whether a $p$-coloring of a knot $K$ is s-extendible, one may investigate the associated surjection $\overline{\rho}: H_1(\Sigma_2) \twoheadrightarrow \mathbb{Z}_p$.
A slice disk $D\subset B^4$ induces a metabolizer $H_D < H_1(\Sigma_2)$ with respect to the linking form by \cite[Theorem 2]{cass-gor1986cobordism}.
A necessary condition for the coloring to extend over $D$ is that $\overline{\rho}$ vanishes on $H_D$.
If $D$ is homotopy ribbon, then this condition is also sufficient.
\end{rem}

\begin{lem}\label{existence}
Assume $p > 1$ is odd and square-free.
Suppose $K \subset S^3$ is a slice knot which admits a $p$-coloring. 
Then there exists a $p$-fold irregular dihedral branched cover $X \rightarrow S^4$ whose branching set is an embedded $S^2$ with a normal cone singularity of type $K$. 

If $K$ is homotopy ribbon, then $f: X \rightarrow S^4$ can be chosen such that, if $M \rightarrow S^3$ denotes the induced irregular dihedral cover branched along $K$, we have $rk\, IH^{\bar{n}}_1(X) \leq rk\, H_1(M)$.
Here, $\bar{n}$ denotes the upper-middle perversity function.
\end{lem}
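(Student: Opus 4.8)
The plan is to assemble $X$ by gluing an irregular dihedral cover of a slice disk to the cone on the induced dihedral cover of $K$. Write $S^4 = B^4 \cup_{S^3} B^4$ and regard the second copy of $B^4$ as the cone $cS^3$ on its boundary. Since $K$ is slice, fix a topologically slice disk $D \subset B^4$ with $\partial D = K$ in the first copy, and set $B := D \cup_K cK \subset S^4$. This is an embedded $2$-sphere: $D$ and $cK$ are disks with common boundary $K$, and near the cone point of $cK$ the triple $(S^4, B, \ast)$ is modeled on $(cS^3, cK, \ast)$, so $B$ is locally flat away from that point and has a cone singularity of type $K$ there in the sense of Definition~\ref{conesing}.

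To build the cover I would first invoke Lemma~\ref{diskext}: since $K$ is $p$-colorable and $D$ is slice, some $p$-coloring of $K$ extends over $D$, producing a $p$-fold irregular dihedral cover $W \to B^4$ branched along $D$ and inducing that coloring on $K = \partial D$. By construction the restriction of $W \to B^4$ to $\partial B^4 = S^3$ is the associated irregular dihedral cover $M \to S^3$ branched along $K$. On the cone side, the cone on this boundary cover, $cM \to cS^3$, is a branched cover with branching set $cK$, and it also restricts to $M \to S^3$ on the boundary. I would then define
\[
X := W \cup_M cM,
\]
a $p$-fold branched cover $f\colon X \to S^4$ with branching set $B$ and a single isolated singularity, the cone point of $cM$, whose link is $M$ (as in Lemma~\ref{pseudomanifold}). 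To see that $f$ is irregular dihedral with a normal singularity, note that $S^4 - B = (B^4 - D) \cup_{S^3 - K} (cS^3 - cK)$ and $cS^3 - cK \simeq S^3 - K$, so van Kampen gives $\pi_1(S^4 - B) \cong \pi_1(B^4 - D)$; the defining surjection $\pi_1(B^4 - D) \twoheadrightarrow D_p$ of $W$ therefore determines $f$, and its restriction along $\pi_1(S^3 - K) \to \pi_1(S^4 - B)$ is the chosen $p$-coloring of $K$, which is onto because $M$ is connected. Hence the singularity is normal and the first assertion holds.

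For the inequality I would take $K$ homotopy ribbon and choose $D$ to be a homotopy ribbon disk; Lemma~\ref{diskext} then yields $rk\, H_1(W) \leq rk\, H_1(M)$ for the resulting $W$. It remains to show $IH^{\bar n}_1(X) \cong H_1(W)$, after which the bound is immediate. Writing $IH$ for $\bar n$-perversity intersection homology, cover $X$ by an open cone neighborhood $U \cong cM$ of the singular point and $V := X \setminus \{\ast\} \simeq W$, with $U \cap V \simeq M$. Since $V$ is a manifold, $IH_\ast(V) = H_\ast(W)$; by the cone formula \cite[Theorem 4.2.1]{friedman2014singular}, $IH_i(cM) \cong H_i(M)$ for $i < 2$ and vanishes for $i \geq 2$ (the vanishing used in the proof of Theorem~\ref{sashka}), the isomorphism being induced by the inclusion $M \hookrightarrow cM$. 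In the intersection-homology Mayer--Vietoris sequence, the map $H_1(M) = IH_1(U \cap V) \to IH_1(U) \oplus IH_1(V) = H_1(M) \oplus H_1(W)$ is then injective, because its first component is the cone-formula isomorphism, and its cokernel is $H_1(W)$; meanwhile the connecting homomorphism $IH^{\bar n}_1(X) \to IH_0(U \cap V)$ has image equal to the kernel of $IH_0(U \cap V) \to IH_0(U) \oplus IH_0(V)$, which is injective since all three spaces are connected, so the connecting map is zero. Exactness gives $IH^{\bar n}_1(X) \cong H_1(W)$, whence $rk\, IH^{\bar n}_1(X) = rk\, H_1(W) \leq rk\, H_1(M)$.

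The main obstacle is the bookkeeping in the gluing, not any single computation: one must guarantee that the coloring furnished by Lemma~\ref{diskext} is exactly the one whose boundary cover is $M$, so that $W$ and $cM$ glue to a genuine branched cover with a normal cone singularity. Fixing the coloring first, constructing $M$ from it, and only then assembling $X = W \cup_M cM$ removes this difficulty; with that ordering both the identification of the branching data and the Mayer--Vietoris computation of $IH^{\bar n}_1(X)$ are routine.
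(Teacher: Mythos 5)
Your proposal is correct and follows essentially the same route as the paper: build $X = W \cup_M cM \to B^4 \cup_{S^3} cS^3$ from the dihedral cover of a (homotopy ribbon) slice disk supplied by Lemma~\ref{diskext}, then use the intersection homology Mayer--Vietoris sequence together with the cone formula to identify $rk\, IH^{\bar{n}}_1(X)$ with $rk\, H_1(W)$ and conclude via the last statement of Lemma~\ref{diskext}. Your extra details (the van Kampen check that the singularity is normal, and the explicit verification that the Mayer--Vietoris sequence degenerates to a short exact sequence) are consistent with, and slightly more explicit than, what the paper records.
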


\begin{proof}
Since $K$ is slice, by Lemma~\ref{diskext} there exists an irregular dihedral cover $f: W \rightarrow B^4$ branched along a slice disk $D$ for $K$.
Its restriction to the boundary $\partial W = M$ is an irregular dihedral cover $f|: M \rightarrow S^3$ branched along $K$.
As in \cite[Proposition 3.4]{kjuchukova2018dihedral}, the map:
$$W \cup_M cM \rightarrow B^4 \cup_{S^3} cS^3$$ 
will serve as the desired irregular dihedral branched cover $X\to S^4$ with branching set $S^2 = D \cup_{\partial D} c(\partial D)$. The singularity on this sphere is the cone point and it is normal by construction. 

To prove the second paragraph of the Lemma, observe that the inclusion $M \hookrightarrow cM$ induces an isomorphism $H_1(M) \cong IH^{\bar{n}}_1(M)$ by the cone formula \cite[Theorem 4.2.1]{friedman2014singular}.
Since $M$ is moreover connected, the Mayer-Vietoris sequence for intersection homology \cite[Theorem 4.4.19]{friedman2014singular} shows that there is a short exact sequence $H_1(M) \hookrightarrow H_1(W) \oplus IH^{\bar{n}}_1(cM) \twoheadrightarrow IH^{\bar{n}}_1(X)$.
Therefore, $rk\, IH^{\bar{n}}_1(X) = rk\, H_1(W)$. 
Now apply the last statement of Lemma~\ref{diskext}.
\end{proof}

Next, we use Lemma~\ref{existence} to calculate the intersection homology Euler characteristic and signature of the cover $X$.

\begin{lem}\label{calc}
Assume $p > 1$ is odd and square-free.
Suppose $K \subset S^3$ is a knot which admits $p$-colorings. 
Let $f: X \rightarrow S^4$ be as in Lemma~\ref{existence}, and let $M \rightarrow S^3$ be the induced irregular dihedral cover of $S^3$ branched along $K$. 
Then the (upper-middle perversity $\bar{n}$) intersection homology Euler characteristic of $X$ is:
$$I\chi(X)= 1-rk\, H_1(M) + \frac{p+1}{2}$$
and its intersection homology signature is:
$$\sigma_{IH}(X) = - \Xi_p(K)$$
where $\Xi_p(K)$ is the invariant associated to the induced $p$-coloring on $K$.	
\end{lem}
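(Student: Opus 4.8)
The plan is to handle the two formulas separately: the signature drops out of Theorem~\ref{sashka}, while the Euler characteristic requires a short Mayer--Vietoris computation whose only nontrivial input is $\chi(W)$.

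For the signature I would specialize Theorem~\ref{sashka} to the cover $f\colon X \to S^4$ furnished by Lemma~\ref{existence}. The base is $S^4$, so its signature vanishes, and the branching sphere $B$ represents the trivial class of $H_2(S^4)=0$, so its self-intersection satisfies $e(B)=0$. With both of these terms equal to zero, the right-hand side of the formula in Theorem~\ref{sashka} collapses to $\Xi_p(K)$, taken with respect to the $p$-coloring of $K$ induced by $f$; hence $\sigma_{IH}(X)=\Xi_p(K)$.

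For the Euler characteristic I would use the decomposition $X = W \cup_M cM$ of Lemma~\ref{existence}, where $W = f^{-1}(B^4)$ is a compact $4$-manifold with boundary $M$ and $cM = f^{-1}(cS^3)$ is the cone on $M$, the two pieces meeting along a collar of $M$. Feeding this into the intersection-homology Mayer--Vietoris sequence \cite[Theorem 4.4.19]{friedman2014singular} and using additivity of Euler characteristics along an exact sequence gives
\[
I\chi(X) = I\chi(W) + I\chi(cM) - I\chi(M).
\]
Since $M$ is a closed odd-dimensional manifold, $I\chi(M)=\chi(M)=0$. Because $M$ is a manifold, $IH^{\bar n}_i(M)=H_i(M)$, and the cone formula \cite[Theorem 4.2.1]{friedman2014singular} with $\bar n(4)=1$ yields $IH^{\bar n}_0(cM)=\mathbb{Q}$, $IH^{\bar n}_1(cM)\cong H_1(M)$ and $IH^{\bar n}_i(cM)=0$ for $i\ge 2$; hence $I\chi(cM)=1-rk\, H_1(M)$.

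The remaining term $I\chi(W)=\chi(W)$ is the one point needing care. Here I would exploit the branched-cover structure of $f\colon W \to B^4$ together with Remark~\ref{irregprop}, which identifies $f^{-1}(D) \to D$ as an unbranched $\tfrac{1}{2}(p+1)$-sheeted covering of the slice disk $D$. As $D$ is contractible this covering is trivial, so $f^{-1}(D)$ is a disjoint union of $\tfrac{1}{2}(p+1)$ disks and $\chi(f^{-1}(D)) = \tfrac{p+1}{2}$. Decomposing $B^4$ into $D$ and its complement, over which $f$ restricts to a genuine $p$-fold cover, additivity of the (compactly supported) Euler characteristic gives $\chi(W) = p\big(\chi(B^4)-\chi(D)\big) + \chi(f^{-1}(D)) = p(1-1) + \tfrac{p+1}{2} = \tfrac{p+1}{2}$. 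Combining the three terms yields $I\chi(X) = \tfrac{p+1}{2} + \big(1 - rk\, H_1(M)\big) - 0 = 1 - rk\, H_1(M) + \tfrac{p+1}{2}$, as claimed. I expect the only delicate step to be the count of components of $f^{-1}(D)$: recognizing the upstairs branch locus as a trivial $\tfrac{1}{2}(p+1)$-fold cover of the contractible disk is what pins down $\chi(W)$, and any miscount there feeds directly into the final answer.
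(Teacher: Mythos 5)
Your proof is correct and follows essentially the same route as the paper: the signature is read off from the singular-branching-set formula with $\sigma(S^4)=0$ and $e(B)=0$, and the Euler characteristic comes from the same decomposition $X = W\cup_M cM$, the cone formula, and the count $\chi(W)=\tfrac{p+1}{2}$ via the branched-cover structure over $(B^4,D)$. (If anything, your citation of Theorem~\ref{sashka} for the signature step is the more accurate one, since the branching set here is not locally flat at the cone point.)
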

\begin{proof}

The signature calculation is a straightforward application of Theorem~\ref{viro}, since $\sigma(S^4) = 0$ and $e(S^2)=0$ as well.

Adopting the notation used in the Proof of Lemma~\ref{existence}, the branched cover $f: X\to S^4$ can be rewritten as:
$$f: W \cup_M cM \rightarrow B^4 \cup_{S^3} cS^3.$$
Therefore, we have the following equality involving intersection homology Euler characterstics:
$$I\chi(X) = I\chi(W)+I\chi\left(cM\right) - I\chi(M).$$

We now calculate the component Euler characteristics.
Since $M$ is a manifold, its intersection homology Euler characteristic is equal to its usual Euler characteristic.
But $M$ is moreover closed and odd-dimensional.
Therefore,
$$I\chi(M) = \chi(M) = 0.$$

By the cone formula \cite[Theorem 4.2.1]{friedman2014singular}, the intersection homology of $cM$ is given by:
$$IH^{\bar{n}}_i(cM) = \left\{\begin{array}{ll}
H_i(M) & \mbox{if}\ i \leq 1\\
0 & \mbox{otherwise}.
\end{array}\right.
$$
Hence,
$$I\chi(cM) = 1 - rk\,H_1(M).$$

Lastly let's consider the manifold $W$, which is a $p$-fold irregular dihedral cover of $B^4$ branched along slice disk $D$.
Because $W$ is a manifold, we have again $I\chi(W) = \chi(W)$.

Let $A \subset W$ denote the upstairs branching set of the map $W\to B^4$ so that $\chi(W) = \chi(A) + \chi(W\backslash A)$, where we have additivity because $(A,\partial A) \subset (W,\partial W)$ is an inclusion of even-dimensional manifolds. 
Since the map $A \rightarrow D$ is an unbranched cover with $\frac{p+1}{2}$  sheets by Remark~\ref{irregprop} and $W\backslash A \rightarrow B^4\backslash D$ is a $p$-sheeted unbranched cover, we find:
$$I\chi(W) = \frac{p+1}{2} \chi(D) + p \left[\chi(B^4) - \chi(D)\right] = \frac{p+1}{2}.$$

Putting everything together, we conclude that, as claimed,:
$$I\chi(X) = 1 - rk\, H_1(M) + \frac{p+1}{2}.$$
\end{proof}

\begin{proof}[Proof of Theorem~\ref{inequality}]

On the one hand, using universal coefficients, and Poincar\'{e} duality \cite[Theorem 8.2.4]{friedman2014singular} for oriented, closed pseudomanifolds (and that the singular point of $X$ has even codimension) we have:
$$I\chi(X) = 2 - 2\cdot rk\, IH^{\bar{n}}_1(X) + rk\, IH^{\bar{n}}_2(X)$$ 
where $\bar{n}$ denotes the upper-middle perversity function.	
This in turn by Lemma~\ref{existence} is greater than or equal to $2 - 2 \cdot rk\, H_1(M)+ rk\, IH^{\bar{n}}_2(X)$.
On the other hand, by Lemma~\ref{calc}:
$$I\chi(X) = 1 - rk\, H_1(M) + \frac{p+1}{2}.$$
Comparing the two: 
\begin{align*}
	1 - rk\, H_1(M) + \frac{p+1}{2} \geq 2-2\cdot rk\, H_1(M) + rk\, IH^{\bar{n}}_2(X)  \\
	\implies rk\, H_1(X) + \frac{p-1}{2} \geq rk\, IH^{\bar{n}}_2(X)
\end{align*}

But the absolute value of the signature $|\sigma_{IH}(X)|$ of the intersection homology pairing on $IH^{\bar{n}}_2(X;\mathbb{Q})$ is bounded above by $rk\, IH^{\bar{n}}_2(X)$.
Lastly we invoke Lemma~\ref{calc} to see $\Xi_p(K) = \sigma_{IH}(X)$.
\end{proof}

\begin{cor}
	\label{obstruction}
	Let $K$ be a slice knot with determinant $\Delta_K(-1)\neq\pm1$ and let $p>1$ be a square-free integer dividing $\Delta_K(-1)$. Assume that for each $s$-extendible $p$-coloring of $K$ and associated $p$-fold irregular dihedral cover $M$ the inequality 
	\begin{equation}
	\label{obstrctn}
	|\Xi_p(K)|> rk\, H_1(M)+ \frac{p-1}{2}	
	\end{equation}
		 holds. Then $K$ is not homotopy ribbon.
\end{cor}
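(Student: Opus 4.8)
The plan is to obtain Corollary~\ref{obstruction} as a sharpened contrapositive of Theorem~\ref{inequality}. The only point requiring care is that the corollary hypothesizes the reverse inequality~\eqref{obstrctn} solely for $s$-extendible colorings, whereas the naive contrapositive of Theorem~\ref{inequality} would demand it for every $p$-coloring; the gap is bridged by observing that the coloring produced by Theorem~\ref{inequality} in the homotopy ribbon case is always $s$-extendible.

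First I would verify that the hypotheses of Theorem~\ref{inequality} are in force. Since the determinant $\det(K)=|\Delta_K(-1)|=|H_1(\Sigma_2;\mathbb{Z})|$ of a knot is always odd, any square-free divisor $p>1$ is automatically odd, matching the standing convention that $p$ be odd. Moreover, as $p$ is square-free and divides $|H_1(\Sigma_2;\mathbb{Z})|$, the argument recalled in the proof of Lemma~\ref{diskext} shows $H_1(\Sigma_2;\mathbb{Z})$ surjects onto $\mathbb{Z}/p\mathbb{Z}$, so $K$ admits $p$-colorings and Theorem~\ref{inequality} applies.

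Next I would argue by contradiction: suppose $K$ is homotopy ribbon, with homotopy ribbon disk $D\subset B^4$. By Theorem~\ref{inequality} there is then a $p$-coloring of $K$ satisfying $|\Xi_p(K)|\leq rk\, H_1(M)+\frac{p-1}{2}$, where $M\to S^3$ is the induced dihedral cover. The heart of the matter is to identify this coloring as $s$-extendible. To do so I would trace through the proof of Theorem~\ref{inequality}: the cover $X\to S^4$ realizing the inequality is built in Lemma~\ref{existence} from a dihedral cover $W\to B^4$ branched along a slice disk for $K$, which in the homotopy ribbon case may be taken to be $D$ itself (via Lemma~\ref{diskext}). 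The coloring carried by $X$ is exactly the one that extends over $D$; since $D$ is in particular a slice disk, this coloring is $s$-extendible (indeed hr-extendible), and the cover $M$ appearing in the inequality is the cover associated to it.

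Finally, this $s$-extendible coloring violates the standing hypothesis~\eqref{obstrctn}, which asserts the strict reverse inequality for every $s$-extendible coloring and its associated cover $M$. This contradiction forces $K$ not to be homotopy ribbon. I expect the only real obstacle to lie in the previous step, namely confirming that the coloring singled out by Theorem~\ref{inequality}, together with its cover $M$, genuinely coincides with an $s$-extendible coloring and cover to which hypothesis~\eqref{obstrctn} applies, rather than with some unrelated $p$-coloring of $K$.
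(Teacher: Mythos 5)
Your proposal is correct and follows essentially the same route as the paper: verify that $p$ is odd and that $K$ admits $p$-colorings (via the oddness of the determinant and the square-free divisibility criterion from Lemma~\ref{diskext}), then apply Theorem~\ref{inequality} in contrapositive form. The one point you elaborate beyond the paper's terse ``now apply Theorem~\ref{inequality}'' --- that the coloring produced in the homotopy ribbon case extends over the ribbon disk and is therefore $s$-extendible, so hypothesis~(\ref{obstrctn}) genuinely applies to it --- is exactly the right detail to make explicit, and your resolution of it by tracing through Lemmas~\ref{diskext} and~\ref{existence} is sound.
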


\begin{remk}
One can apply the above corollary without necessarily determining which $p$-colorings of $K$ extend over slice disks.  Let $\Sigma_2$ denote the two-fold cover branched along $K$. By Remark~\ref{metab}, every $s$-extendible $p$-coloring has the property that the induced surjection $\bar{\rho}: H_1(\Sigma_2) \twoheadrightarrow \mathbb{Z}_p$ vanishes on some metabolizer $H_D$.	
Therefore, if the inequality~(\ref{obstrctn}) holds for the (potentially larger) set of $p$-colorings that vanish on some metabolizer $H_D$, then $K$ is not homotopy ribbon.  
\end{remk}

\begin{proof}
	First, since $K$ is a knot, $|\Delta_K(-1)|$ is odd~\cite[Theorem 6.7.1]{cromwell2004knots}, so $p$ must be odd as well. Secondly, when $p$ is square-free, the knot $K$ admits a $p$-coloring if and only if $p$ divides the determinant of $K$. This follows from the discussion of existence of dihedral branched covers given in the proof of Lemma~\ref{diskext}, together with the fact that  $|\Delta_K(-1)|$ is the order of the first homology of the double branched cover of $K$~\cite[p. 149]{fox1962quick}. Hence, for any value of $p$ and coloring that extends over a slice disk, the associated value of $|\Xi_p(K)|$ can be discussed. Now apply Theorem~\ref{inequality}.
\end{proof}

The above obstruction simplifies whenever the term $rk\, H_1(M)$ vanishes. Knots with this property are discussed below.  

\subsection{Rationally admissible singularities} 		
\label{some knots}

The invariant $\Xi_p(K)$, as given in Equation~(\ref{xi}), is well-defined, provided that $K$ arises as the only singularity on the otherwise locally flat branching set of a $p$-fold dihedral cover over some four-manifold; this is a straightforward generalization of~\cite[Proposition 2.7]{kjuchukova2018dihedral} applying Theorem~\ref{sashka}. Put differently,  $\Xi_p(K)$ is an invariant of a knot $K$ together with a fixed $p$-coloring whenever this coloring extends over some locally flat surface $F$  embedded in some four-manifold $X$ with $\partial X=S^3$ and $\partial F=K$. Note also that the dihedral cover $f: W\to X$ induced by such a coloring can be extended in the obvious way to a singular dihedral branched cover $Z=W\cup c(\partial W) \to X\cup D^4$. The total space $Z$ is a manifold if and only if $f^{-1}(\partial X)\cong S^3$. More generally, $f^{-1}(\partial X)$ being a rational homology sphere is equivalent to the vanishing of $rk\, H_1(M)$, which in turn simplifies the ribbon obstruction given in Theorem~\ref{inequality}.

\begin{defn}
\label{admissible}
	Let $X$ be a four-manifold with $\partial X\cong S^3$ and let $F\subset X$ be a properly embedded locally flat surface with connected boundary $K$. If the pair $(X, F)$ admits a $p$-fold irregular dihedral branched cover $W\to X$ with $\partial W$ connected, we say that $K$ is {\it $p$-admissible over $X$}. If
	 moreover $M:=\partial W$ is a rational homology sphere, $K$ is said to be {\it rationally $p$-admissible over $X$}. When $M\cong S^3$, we call $K$  {\it strongly $p$-admissible over $X$}. 
\end{defn}

For any hr-extendible coloring of a knot $K$, when the associated dihedral cover of $K$ is a rational homology sphere, Proposition~\ref{inequality} gives $|\Xi_3(K)| \leq 1$.
Moreover, under these assumptions, $\Xi_3(K)$ is odd \cite[Theorem 5]{cahnkju2018genus}, so  the above inequality in fact implies  $\Xi_3(K) = \pm 1$.  Computing $\Xi_p$ is especially approachable for $p=3$ thanks to~\cite{cahnkjuchukova2018computing} and~\cite{cahnkjuchukova2016linking}. 
More generally, by reasoning as in the proof of Lemma~\ref{genus}, we can calculate an upper bound on $rk\, H_1(M)$ from the bridge number of $K$, which allows us to simplify (although also potentially weaken) the inequality~(\ref{ineq}). It is also possible to compute  $rk\, H_1(M)$  using Fox's method~\cite{fox1962quick} for writing down a presentation for $\pi_1(M)$ from a colored diagram of $K$.

\begin{lem}
\label{genus}
	Let $K$ be a $p$-colorable $n$-bridge knot and $f: M\to S^3$ a $p$-fold irregular dihedral cover branched along $K$. Then, the Heegaard genus of $M$, and, consequently, $rk\, H_1(M)$, is at most $\frac{1}{2}(p-1)(n-2)$.
	\end{lem}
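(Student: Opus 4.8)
The plan is to put $K$ in $n$-bridge position and lift the resulting genus-zero splitting of $(S^3,K)$ to a Heegaard splitting of $M$. Write $S^3 = B_1 \cup_S B_2$, where $S$ is a bridge sphere meeting $K$ transversally in $2n$ points and $\tau_i := K \cap B_i$ is a trivial $n$-string tangle for $i=1,2$. Setting $W_i := f^{-1}(B_i)$ and $\Sigma := f^{-1}(S)$, we obtain a decomposition $M = W_1 \cup_\Sigma W_2$. First I would argue that this is a Heegaard splitting, i.e.\ that $\Sigma$ is a closed connected surface and each $W_i$ is a handlebody. Granting this, the Heegaard genus of $M$ is at most $g(\Sigma)$, and since a genus-$g$ Heegaard splitting exhibits $\pi_1(M)$ as a quotient of a free group of rank $g$, we also get $rk\, H_1(M)\le g(\Sigma)$. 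So the whole lemma reduces to computing $g(\Sigma)$.

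For the handlebody claim — which I expect to be the main technical point — I would use that a trivial tangle is a product: $(B_i,\tau_i)\cong (D^2,P)\times I$, with $P$ a set of $n$ points. Consequently the branched cover is also a product, $W_i = f^{-1}(B_i)\cong \widehat{\Sigma}_i\times I$, where $\widehat{\Sigma}_i\to D^2$ is the induced $p$-fold cover of the disk branched over $P$. Each component of $\widehat{\Sigma}_i$ is a compact surface with boundary, so each component of $W_i$ is a handlebody with \emph{connected} boundary. This has two consequences: each $W_i$ is a disjoint union of handlebodies, and, since a handlebody has connected boundary, the components of $\Sigma$ are in bijection with the components of $W_1$ (and of $W_2$), each component of $\Sigma$ closing up a single component of $W_1$ against a single component of $W_2$ into a component of $M$. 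Because $M$ is connected (Remark~\ref{irregprop}), it follows that $\Sigma$, $W_1$, and $W_2$ are all connected, so $M = W_1\cup_\Sigma W_2$ is a genuine Heegaard splitting.

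It then remains to compute $g(\Sigma)$ by Riemann--Hurwitz. The map $\Sigma = f^{-1}(S)\to S = S^2$ is a $p$-fold cover branched over the $2n$ points of $K\cap S$. By Remark~\ref{irregprop}, over each branch point there are exactly $\frac{p-1}{2}$ preimages of branching index $2$ and one of index $1$, hence $\frac{p+1}{2}$ preimages and ramification $p-\frac{p+1}{2}=\frac{p-1}{2}$. Therefore
\begin{align*}
\chi(\Sigma) = p\,\chi(S^2) - 2n\cdot\tfrac{p-1}{2} = 2p - n(p-1),
\end{align*}
and since $\Sigma$ is connected of genus $g(\Sigma)$, the relation $2 - 2g(\Sigma) = 2p - n(p-1)$ gives
\begin{align*}
g(\Sigma) = \frac{n(p-1) - 2(p-1)}{2} = \frac{1}{2}(p-1)(n-2).
\end{align*}
Combining this with the first paragraph yields both the Heegaard genus bound and the bound on $rk\, H_1(M)$.

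The only genuinely delicate input is the product description of the trivial tangle and the resulting handlebody structure of $W_i$; everything else is the Euler-characteristic bookkeeping above, driven entirely by the local branching data recorded in Remark~\ref{irregprop}. The point to verify carefully is that the identification $(B_i,\tau_i)\cong (D^2,P)\times I$ is compatible with the $p$-coloring, so that $W_i$ really is the product $\widehat{\Sigma}_i\times I$; this is precisely where the hypothesis that $\tau_i$ is an unknotted, unlinked (i.e.\ trivial) tangle — which is exactly what $n$-bridge position provides — gets used.
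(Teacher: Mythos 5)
Your proof is correct and follows the same route as the paper: lift the bridge-sphere decomposition $S^3=B_1\cup_S B_2$ to $M=f^{-1}(B_1)\cup_{f^{-1}(S)}f^{-1}(B_2)$, observe that trivial tangles lift to handlebodies, and compute $\chi(f^{-1}(S))=2p-n(p-1)$ from the local branching data of Remark~\ref{irregprop}. The paper simply asserts the handlebody and connectedness claims that you spell out via the product structure $(B_i,\tau_i)\cong(D^2,P)\times I$; your added detail is accurate and the arithmetic agrees.
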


\begin{proof}
	The argument follows the exact outlines of the proof of the well-known fact that the $p$-fold irregular dihedral cover of a two-bridge knot is the sphere. Let $K$ be a knot in $n$-bridge position and $S\subset S^3$ a bridge sphere for $K$. Write $S^3=B^3_1\cup _S B^3_2$. We know that $K\cap S$ consists of $2n$ points and $K\cap B_i^3$ is a trivial $n$-tangle for each $i=1, 2$, implying that $f^{-1}(B_i^3)$ is a handlebody of genus equal to the genus of $f^{-1}(S)$.	 Since $f$ is an irregular dihedral cover, each branch point of $f$ has $\frac{p+1}{2}$ pre-images. Therefore, we compute $\chi(f^{-1}(S))=p(2-2n)+2n\frac{p+1}{2} =
	2p-pn+n$, concluding that $g(f^{-1}(S))=\frac{1}{2}(p-1)(n-2)$. This gives the desired bound on the Heegaard genus of $M$ and, therefore, on the rank of its first homology. 
	\end{proof}

\begin{cor}\label{bridgebound}
	Let $K$ be a slice knot whose determinant satisfies $\Delta_K(-1)\neq \pm1$. If $p>1$ is a square-free integer dividing $\Delta_K(-1)$, then $|\Xi_p(K)| \leq \frac{1}{2}(p-1)(n-1)$ for any value $\Xi_p(K)$ defined using an hr-extendible $p$-coloring of $K$; also see Remark~\ref{metab}. \end{cor}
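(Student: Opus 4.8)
The plan is to combine the homotopy-ribbon inequality established in Theorem~\ref{inequality} with the bridge-number bound of Lemma~\ref{genus}, reading the indeterminate $n$ in the statement as the bridge number of $K$. The arithmetic preconditions are immediate: because $K$ is a knot, $|\Delta_K(-1)|$ is odd, so any square-free $p$ dividing it is odd; and since $p \mid |\Delta_K(-1)| = |H_1(\Sigma_2)|$ with $p$ square-free, $K$ admits a $p$-coloring, exactly as in the proof of Corollary~\ref{obstruction}. Thus every hypothesis needed to speak of $\Xi_p(K)$ for an hr-extendible coloring is in place, and the determinant condition $\Delta_K(-1)\neq\pm 1$ merely guarantees that such colorings exist.

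First I would fix an hr-extendible $p$-coloring of $K$ and let $D \subset B^4$ be a homotopy ribbon disk over which it extends. Here one must be slightly careful about quantifiers: Theorem~\ref{inequality} is phrased as producing \emph{some} good coloring when $K$ is homotopy ribbon, whereas the corollary asks for the bound for \emph{every} hr-extendible coloring. To handle this I would not quote the theorem verbatim but instead re-run its proof for the chosen coloring. Feeding the given homotopy ribbon disk $D$ into the construction of Lemma~\ref{existence} produces a cover $f: X \to S^4$ with $rk\, IH^{\bar{n}}_1(X) \leq rk\, H_1(M)$, via the homotopy-ribbon half of Lemma~\ref{diskext}, where $M \to S^3$ is the irregular dihedral cover branched along $K$ determined by this coloring. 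The Euler characteristic and signature computations of Lemma~\ref{calc}, together with Poincar\'{e} duality for the oriented closed pseudomanifold $X$, then yield
$$|\Xi_p(K)| = |\sigma_{IH}(X)| \leq rk\, H_1(M) + \frac{p-1}{2}$$
exactly as in the proof of Theorem~\ref{inequality}.

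It then remains only to bound $rk\, H_1(M)$. Writing $n$ for the bridge number of $K$, Lemma~\ref{genus} gives $rk\, H_1(M) \leq \frac{1}{2}(p-1)(n-2)$, since $M$ is the $p$-fold irregular dihedral cover branched along the $n$-bridge knot $K$. Substituting into the inequality above,
$$|\Xi_p(K)| \leq \frac{1}{2}(p-1)(n-2) + \frac{p-1}{2} = \frac{1}{2}(p-1)(n-1),$$
which is precisely the claimed bound.

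I do not anticipate a genuine obstacle, as the statement is an arithmetic consequence of two results already established; the only point requiring attention is the quantifier mismatch flagged above, namely re-deriving the inequality of Theorem~\ref{inequality} for the prescribed hr-extendible coloring rather than for whatever coloring the theorem happens to select. The reference to Remark~\ref{metab} in the statement serves to locate, in practice, which colorings are candidates to be s- or hr-extendible, but it is not needed for the inequality itself.
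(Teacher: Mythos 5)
Your proof is correct and follows exactly the paper's own route: apply Theorem~\ref{inequality} to the given hr-extendible coloring and substitute the Heegaard-genus bound $rk\, H_1(M) \leq \frac{1}{2}(p-1)(n-2)$ from Lemma~\ref{genus}. Your attention to the quantifier mismatch (the theorem asserts existence of a good coloring, while the corollary needs the bound for every hr-extendible one) is well placed; the paper resolves it the same way you do, implicitly applying the argument of Theorem~\ref{inequality} to the prescribed coloring rather than quoting its existential conclusion.
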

\begin{proof}
For any $p$ as above, $K$ admits $p$-colorings as in Corollary~\ref{obstruction}. Given a $p$-coloring of $K$, Lemma~\ref{genus} implies that the rank of the first homology of the induced $p$-fold irregular dihedral branched cover $M$ of $S^3$ satisfies $rk\, H_1(M; \mathbb{Z})\leq \frac{1}{2}(p-1)(n-2)$. 	
If in addition this coloring is hr-extendible, then Theorem~\ref{inequality} gives $|\Xi_p(K)| \leq \frac{1}{2}(p-1)(n-1)$ for the associated invariant $\Xi_p(K)$. \end{proof}

If $K$ is a pretzel knot, then~\cite[Theorem 1]{hosokawa19863} implies any $3$-fold irregular dihedral cover of $S^3$ branched along $K$ is either a rational homology sphere or a connect sum of a rational homology sphere with some number of copies of $S^2 \times S^1$. The number of $S^2 \times S^1$ summands, and hence the rank of the first homology of the irregular dihedral cover, can be determined via \cite{hosokawa19863}'s cancelling procedure that reduces a pretzel knot $K$ to a split sum of links, where each component link is trivially colored and is a connect sum of torus links. In particular, by applying this cancelling procedure, one can identify rationally and strongly 3-admissible pretzel knots among all knots that admit a branching surface as in Definition~\ref{admissible}. A necessary and sufficient condition for the existence of such a surface in $B^4$ is given in~\cite{kjorr2017admissible}.

Finally, we note that the results of this section can be generalized to potentially distinguish the four-genus from the homotopy ribbon four-genus for non-slice knots as well. Given a $p$-colorable knot $K$, the argument used to prove Theorem~\ref{inequality} can be applied to deduce a lower bound for the minimal genus of a homotopy ribbon surface $F\subset B^4$ with $\partial F=K$ and such that a $p$-coloring of $K$ extends over $F$. For the special case of strongly $p$-admissible knots, this was done in~\cite{cahnkju2018genus}. The lower bound obtained therein is seen to equal the homotopy ribbon genus, without regard to coloring, for a family of knots $\{K_m\}^\infty_{m=0}$ such that the homotopy ribbon genus of $K_m$ equals $m$. This genus is found by an explicit construction matching the lower bound. By Lemma~\ref{diskext}, when $m$ is positive,   the $\Xi$ invariant alone is enough to detect that the knots $K_m$ are not ribbon.\\

{\bf Acknowledgment.} The authors would like to thank Laurentiu Maxim and Kent Orr for helpful discussions. AK thanks the MPIM for its support and hospitality.\\ 		

Christian Geske\\
University of Wisconsin Madison\\
{\it cgeske@wisc.edu}\\

Alexandra Kjuchukova\\
Max Planck Institute for Mathematics-Bonn\\
{\it sashka@mpim-bonn.mpg.de}\\

Julius L. Shaneson\\
University of Pennsylvania\\
{\it shaneson@math.upenn.edu}

\nocite{ruberman2016smoothly}

\bibliographystyle{amsplain}
\bibliography{BrCovBib}

\end{document}